\def\namedlabel#1#2{\begingroup
	#2%
	\def\@currentlabel{#2}%
	\phantomsection\label{#1}\endgroup
}
\newtheorem{thm}{Theorem}
\newtheorem{lem}{Lemma}
\newtheorem{assumeA}{Assumption A.\hspace*{-1.2mm}}
\newtheorem{rem}{Remark}
\newenvironment{assumeAprime}[1]{%
	\manualtheoreminner
}{\endmanualtheoreminner}
\newenvironment{thmPrime}[1]{%
	\manualtheoreprime
}{\endmanualtheoreprime}
\newcommand{\remove}[1]{}
\newcommand{\given}[2]{\left. #1 \right| #2}
\newcommand{\ilparenthesis}[1]{( #1 )}
\newcommand{\ilbracket}[1]{[ #1 ]} 
\newcommand{\ilset}[1]{\{ #1 \}}
\def \set#1{\left \{#1 \right \}}
\newcommand{\bracket}[1]{\left[  #1  \right]}
\def \parenthesis#1{\left (#1 \right)}
\newcommand{\norm}[1]{\| #1 \|} 
\newcommand{\largenorm}[1]{\bigg\| #1 \bigg\|} 
\newcommand{\abs}[1]{\left| #1 \right|}
\newcommand{\indicator}{\mathbb{I}}
\newcommand{\diff}{\mathrm{d}} 
\newcommand{\real}{\mathbb{R}}
\newcommand{\field}{\mathcal{F}}
\newcommand{\Prob}{\mathbbm{P}}
\newcommand{\E}{\mathbbm{E}}
\newcommand{\Var}{\mathrm{Var}}
\newcommand{\transpose}{\mathsmaller{T}}
\newcommand{\tr}{\mathrm{tr}}
\newcommand{\zero}{\boldsymbol{0}}
\newcommand{\diag}{\mathrm{diag}}
\newcommand{\bA}{\bm{A}}
\newcommand{\bB}{\bm{B}}
\newcommand{\bH}{\bm{H}}
\newcommand{\bI}{\bm{I}}
\newcommand{\bM}{\bm{M}}
\newcommand{\bP}{\bm{P}}
\newcommand{\bg}{\bm{g}}
\newcommand{\bv}{\bm{v}}
\newcommand{\bx}{\bm{x}}
\renewcommand{\alpha}{\upalpha}
\renewcommand{\beta}{\upbeta}
\renewcommand{\gamma}{\upgamma}
\renewcommand{\delta}{\updelta}
\renewcommand{\epsilon}{\upepsilon}
\renewcommand{\varepsilon}{\upvarepsilon}
\renewcommand{\zeta}{\upzeta}
\renewcommand{\eta}{\upeta} 
\renewcommand{\theta}{\uptheta}
\renewcommand{\vartheta}{\upvartheta}
\renewcommand{\iota}{\upiota}
\renewcommand{\kappa}{\upkappa}
\renewcommand{\lambda}{\uplambda}
\renewcommand{\mu}{\upmu}
\renewcommand{\nu}{\upnu}
\renewcommand{\xi}{\upxi}
\renewcommand{\pi}{\uppi}
\renewcommand{\rho}{\uprho}
\renewcommand{\varrho}{\upvarrho}
\renewcommand{\sigma}{\upsigma}
\renewcommand{\tau}{\uptau}
\renewcommand{\upsilon}{\upupsilon}
\renewcommand{\phi}{\upphi}
\renewcommand{\varphi}{\upvarphi}
\renewcommand{\chi}{\upchi}
\renewcommand{\psi}{\uppsi}
\renewcommand{\omega}{\upomega} 
\newcommand{\bbeta}{\boldsymbol{\upbeta}}
\newcommand{\bDelta}{\boldsymbol{\Delta}} 
\newcommand{\bzeta}{\boldsymbol{\upzeta}}
\newcommand{\btheta}{\boldsymbol{\uptheta}}
\newcommand{\bvartheta}{\boldsymbol{\uptheta}^* }
\newcommand{\bLambda}{\boldsymbol{\Lambda}}
\newcommand{\bGamma}{\boldsymbol{\Gamma}}
\newcommand{\bmu}{\boldsymbol{\upmu}} 
\newcommand{\bxi}{\boldsymbol{\upxi}}
\newcommand{\bSigma}{\boldsymbol{\Sigma}}
\newcommand{\gain}{a}
\newcommand{\perturb}{c} 
\newcommand{\tperturb}{\widetilde{c}}
\newcommand{\loss}{L}   
\newcommand{\hbg}{\hat{\bg}}
\newcommand{\hbtheta}{\hat{\btheta}}
\newcommand{\bias}{\bbeta}
\newcommand{\algoName}[1]{\textsc{#1}} 
\newcommand{\obH}{\overline{{\bH}}}
\newcommand{\hbH}{\hat{\bH}}
\newcommand{\tbDelta}{\widetilde{\bDelta}}
\newcommand{\mapping}{\bm{m}} 
\newcommand{\noise}{\bxi} 
\newcommand{\Dimension}{d} 
\newcommand{\queryNum}{q}
\newcommand{\weight}{w}
\newcommand{\BoundThirdOrder}{D_1}
\newcommand{\BoundPerturbation}{D_2}
\newcommand{\BoundNoise}{D_3}
\newcommand{\BoundSecondOrder}{D_4}
\newcommand{\BoundFirstNoisy}{D_5}
\newcommand{\constNum}{C}
\newcommand{\noisyG}{\mathsf{\bg}}
\newcommand{\noisyGcomponent}{\mathsf{g}}
\newcommand{\noisyGcov}{\mathsf{\Sigma}}
\newcommand{\mappingPD}{\bm{f}}
\newcommand{\accuracy}{\upepsilon}
\newcommand{\individualLoss}{\mathrm{loss}}
\newcommand{\penaltyPara}{\upkappa}
\newcommand{\hyperPara}{\mathcal{S}}
\title[Hessian Inverse Approximation as Covariance for  Random Perturbation]{Hessian Inverse Approximation as Covariance for  Random Perturbation in Black-Box Problems}
\begin{document}
	
	\remove{ 
	\textcopyright~12th International (Virtual) Workshop on ``Optimization for Machine Learning'' as a part of the NeurIPS 2020 conference.  Personal use of this material is permitted.  Permission from OPT2020 must be obtained for all other uses, in any current or future media, including reprinting/republishing this material for advertising or promotional purposes, creating new collective works, for resale or redistribution to servers or lists, or reuse of any copyrighted component of this work in other works.
	\thispagestyle{empty}
	\newpage
	
	\setcounter{page}{1}
}

\maketitle

\begin{abstract}  
	
		 In  stochastic optimization problems using  noisy zeroth-order (ZO) oracles only, 
	the      randomized counterpart   of Kiefer-Wolfowitz-type      method is widely used to estimate the gradient. 
	Existing algorithms 
	generate the randomized perturbation from a zero-mean and unit-covariance distribution. In contrast, this work considers the generalization where the perturbations have  a potentially  non-identity covariance  constructed from the history of the ZO queries. We propose  to feed  the second-order approximation into the covariance matrix of the random perturbation,  so it  is dubbed as Hessian-aided random perturbation (HARP).  
	HARP collects  four zeroth-order queries per iteration to  form   approximations  for both  the gradient  and the Hessian.
	We show the convergence (in an  almost surely sense) and derive the convergence rate for \algoName{HARP} under standard  assumptions. We demonstrate, with theoretical guarantees and numerical experiments, that \algoName{HARP} is less sensitive to ill-conditioning and more query-efficient than   other gradient approximation schemes using unit-covariance random perturbation.

\end{abstract}

\begin{keywords} 
       
       stochastic optimization, simultaneous perturbation, gradient-free methods, Hessian approximation
\end{keywords}

\section{Introduction} 
\label{sec:introduction}

 Stochastic approximation (SA) is a class of recursive procedures to locate roots of equations in the presence of  noisy measurements.  
When only noisy zeroth-order (ZO) information
is available, it is common  practice to generate deterministic perturbation \cite{kiefer1952stochastic,blum1954multidimensional}  or random perturbation  \cite{ermol1969method,katkovnik1972convergence,spall1992multivariate} in   
finding extrema. SA methods using ZO information have regained their popularity  in evolutionary strategy (as an alternative to reinforcement learning) \cite{salimans2017evolution,mania2018simple}  and adversarial image attack \cite{kurakin2016adversarial,carlini2017towards}.   To the best of our knowledge, all the existing random-perturbation-based methods generate the   perturbation from a distribution with zero-mean and unit-covariance, which enforce    that every component of the perturbation vector  is independent with all other components.  The resulting gradient estimate may not be robust to scaling and correlation of different parameters. Therefore,  this paper establishes the theoretical guarantee for the SA procedure using random perturbation with    non-identity covariance.  Specifically, we feed the Hessian inverse approximation into  the perturbation covariance, so the newly-proposed method is dubbed as Hessian-aided random perturbation (HARP).  HARP exhibits faster and more stable convergence performance   other SA algorithms   in  ill-conditioned problems, for which we provide both the theoretical analysis and the numerical illustration (via universal image attack).

 We now    describe the problem setting.   
  Let  $\btheta\in\real^{\Dimension}$  concatenate  all the adjustable   model parameters. 
Let the random variable $\upomega\in\Omega$ represent   the (generally uncontrollable) stochasticity of the underlying system.
Consider  
\begin{equation}\label{eq:SOsetup}
\min_{\btheta\in\real^\Dimension}\loss\ilparenthesis{\btheta} \equiv \E _{\upomega\sim\Prob} \ilbracket{\ell\ilparenthesis{\btheta,\upomega}}\,,
\end{equation} 
where the   loss function $\loss\ilparenthesis{\cdot}: \real^{\Dimension}\mapsto\real $   measures the  underlying system  performance,  and the random variable  $\ell\ilparenthesis{\cdot,\cdot}: \real^{\Dimension}\times\Omega \mapsto\real$ evaluated at   $ \ilparenthesis{\btheta,\upomega} $ 
represents  a  
noisy  observation of $ \loss\ilparenthesis{\btheta}$  corrupted by  $\upomega$. 
 Under the context that \emph{only} noisy zeroth-order (ZO) information $\ell\ilparenthesis{\cdot,\upomega}$ for some $\upomega\sim\Prob$ is available at certain values of $\btheta$ and that these noisy  ZO  queries  may be  \emph{expensive}, this work  considers 
 the    generic  stochastic approximation (SA) algorithm:
 \begin{equation}\label{eq:1stOrderSA}
 \hbtheta_{k+1} = \hbtheta_k - \gain_k \hbg_k\ilparenthesis{\hbtheta_k}\,,\,\, k\ge 1\,,
 \end{equation} where $\hbtheta_{k}$ denotes the  recursive estimate  at the \(k\)th iteration, $\hbg_k\ilparenthesis{\hbtheta_k}$ represents  the estimate for the gradient $\bg\ilparenthesis{\hbtheta_k}$, and $\gain_k>0 $ is the stepsize.     
 This work focuses  on  the following  gradient estimation scheme   using   two ZO queries per iteration: 
  \begin{align}\label{eq:ZOgradientTwoMeasurements}
 &\hbg_k\ilparenthesis{\hbtheta_k} = \frac{ \ell\ilparenthesis{\hbtheta_k+\perturb_k\bDelta_k,\upomega_{k}^{+}}  - \ell\ilparenthesis{\hbtheta_k-\perturb _k\bDelta_k, \upomega_k^{-}} }{2 \perturb _k}  \mapping_k\ilparenthesis{\bDelta_k}\,,
 \end{align} 
 where $\perturb_k$ represents  the   differencing  magnitude,    the $\Dimension$-dimensional random perturbation vectors $ \bDelta_k $ 
 is assumed to be drawn from a distribution with $\zero$-mean and $\bSigma_k^{-1}$-covariance, and the mapping  $\mapping_k\ilparenthesis{\cdot}:\real^{\Dimension}\mapsto\real^{\Dimension}$ is odd. The details  will be  discussed momentarily.

 As for  the statistical structure between $ \upomega_k^{+} $ and $ \upomega_k^{-} $, 
 two classical scenarios are considered.  The first one  where $ \upomega_k^{+} $ and $ \upomega_k^{-} $ are independent and identically distributed  will be termed  as  IID. The antithesis of IID, where $ \upomega_k^{+} = \upomega_k^{-}  $, will be  referred to as ``common random number''  (CRN). The CRN scenario are useful  \emph{simulation-based} optimization.

 \subsection{Prior Work and Our Contribution}
 \label{subsect:PriorWork} 
 
 The generic  form for gradient estimate in   (\ref{eq:ZOgradientTwoMeasurements}) subsumes  
 random direction stochastic approximation (\algoName{RDSA})   \cite{ermol1969method,ermoliev1983stochastic} with $\bDelta_k$ being uniformly distributed on the unit spherical surface and $\mapping_k\ilparenthesis{\bDelta_k} = \Dimension\bDelta_k$,  
 smoothed functional stochastic approximation (\algoName{SFSA}) \cite{katkovnik1972convergence} with $\bDelta_k$ being standard multivariate normally distributed and $\mapping_k\ilparenthesis{\bDelta_k} = \bDelta_k$, simultaneous perturbation stochastic approximation (\algoName{SPSA})   \cite{spall1992multivariate} with each component of $\bDelta_k$ being Rademacher distributed and $\mapping_k\ilparenthesis{\bDelta_k}  = \bDelta_k$.  Although the randomized scheme (\ref{eq:ZOgradientTwoMeasurements}) exists  for a long time and demonstrates numerical  advantages over \algoName{FDSA}  \cite{kiefer1952stochastic}, theoretical foundation regarding the \emph{optimal} choices of $\bDelta_k$ is lacking and extra caution is required in its implementation.

  We propose a new algorithm called ``Hessian-aided random perturbation'' (HARP).
The choice of feeding Hessian approximation into $\bSigma_k$ is motivated by overcoming the shortcomings of $\bSigma_k=\bI$ in Section~\ref{subsect:Motivation},   analyzed theoretically through almost surely convergence and convergence rate  in   Section~\ref{sect:CRN}, and demonstrated through two numerical experiments in Section~\ref{sect:Numerical}.  
Previously, 
in both stochastic optimization  \cite{spall2000adaptive}  and deterministic optimization, the Hessian  is applied in parameter update \emph{only}.  \algoName{HARP}    adaptively  changes    the covariance $\bSigma_k^{-1}$  of the perturbation $\bDelta_k$ using Hessian approximation, so that one can conveniently handle the issues   pertaining to   the scaling and correlation of different parameters, see Section~\ref{subsect:Motivation}. Compared with prior algorithms using unit-covariance random perturbation, \algoName{HARP} exhibits faster and more stable convergence performance, especially in ill-conditioned problems.

What sets our work different from the prior work  \cite{nesterov2017random,ghadimi2013stochastic} is       discussed in Subsection~\ref{subsect:Nesterov}. In short, \cite{nesterov2017random} is applicable for additive\footnote{This is a special CRN case, which completely remove the dependency on the observation noise from  the entire gradient estimate $\hbg_k\ilparenthesis{\hbtheta_k}$ in (\ref{eq:ZOgradientTwoMeasurements}). Namely, additive CRN noise, the numerator in (\ref{eq:ZOgradientTwoMeasurements}) involves the randomness in $\bDelta_k$ only.   }  CRN noise, and the corresponding analysis cannot be generalized to the general CRN noise discussed in Section~\ref{sect:CRN}, not to mention the IID noise discussed in Section~\ref{sect:IID}. Additionally,  \cite{ye2018hessian} also considers leveraging the Hessian estimates to achieve faster convergence. The results therein have to be interpreted with extra caution: the random perturbation $\bDelta_k$ impacts  both the gradient and the Hessian estimates at each iteration, yet the proofs therein ignore the randomness in the Hessian estimate.

  \subsection{Notation Convention}\label{subsect:Notation}
   
     \paragraph{Matrix and vector operations} Let $\bA\in\real^{\Dimension\times\Dimension}$ be a matrix and let $ \bx\in\real^{\Dimension} $ be a vector.   
   $ \norm{\bx} $ returns the Euclidean norm of $\bx$, and $\norm{\bA}$ returns the spectral norm of $\bA$.     
   If  $\bA$ is real-symmetric, $ \uplambda_{\min}\ilparenthesis{\bA} $ and $ \uplambda_{\max}\ilparenthesis{\bA}  $ return the smallest and the largest eigenvalues of $\bA$.   The binary operator $\otimes$ represents the Kronecker product.

       \paragraph{Probability and SA conventions}
   For a sequence  of random variables $X_k$, $X_k = o_P(1)$ means that $X_k$ converges to zero in probability as $k\to\infty$, and $ X_k= O_P(1) $ means that $ X_k $ is stochastically bounded.

     Let $\field_k$ represent the history of the recursion (\ref{eq:1stOrderSA}) until the \(k\)th iteration, and let $ \E_k\ilparenthesis{\cdot} $ denote the conditional expectation $ \E\ilbracket{\given{\cdot}{\field_k}} $.   To facilitate later discussion, we rewrite  $\hbg_k\ilparenthesis{\hbtheta_k}$  as:
  \begin{align}
  	 \hbg_k\ilparenthesis{\hbtheta_k} 
  	&\,\, = \bg\ilparenthesis{\hbtheta_k} + \E_k\ilbracket{\hbg_k\ilparenthesis{\hbtheta_k} - \bg\ilparenthesis{\hbtheta_k}} + \ilset{ \hbg_k\ilparenthesis{\hbtheta_k} - \E_k\ilbracket{\hbg_k\ilparenthesis{\hbtheta_k}} }\nonumber\\
  	&\,\,\equiv \bg\ilparenthesis{\hbtheta_k} + \bias_k\ilparenthesis{\hbtheta_k} + \noise_k\ilparenthesis{\hbtheta_k}\,, \label{eq:gDecomposition}
  \end{align}
  where $\bias_k\ilparenthesis{\hbtheta_k}$ represents  the bias of $\hbg_k\ilparenthesis{\hbtheta_k}$ as an estimator of $\bg\ilparenthesis{\hbtheta_k}$, and $\noise_k\ilparenthesis{\hbtheta_k}$ represents the noise term.  The decomposition (\ref{eq:gDecomposition}) is useful in asymptotic normality of $\hbtheta_k$  later, where $\bias_k\ilparenthesis{\hbtheta_k}$ affects the asymptotic mean and $\noise_k\ilparenthesis{\hbtheta_k}$ affects the asymptotic variance.

   \paragraph{Miscellaneous notation }     $\indicator_E$ represents the indicator function of a logical expression $E$.  
  For  $\loss\ilparenthesis{\cdot}$ that is three-times continuously differentiable, let  $ \loss^{(3)}\ilparenthesis{\btheta} \in\real^{1\times \Dimension^3}  $
  represent the third-order derivative  of $\loss\ilparenthesis{\cdot}$ evaluated at $\btheta$; moreover, let $ L^{(3)}_{i_1,i_2,i_3}\ilparenthesis{\btheta} \in\real $ represent the third-order derivative of $\loss\ilparenthesis{\cdot}$  with respect to (w.r.t.) the \(i_1\)th, \(i_2\)th,  and \(i_3\)th elements  of $\btheta$.

\section{    Motivation and Description of   \algoName{HARP}} \label{sect:Description}
This section 
motivates \algoName{HARP} and lists the   pseudo code.

\subsection{Motivation Behind \algoName{HARP}} \label{subsect:Motivation}

Prior work  summarized  Section~\ref{subsect:PriorWork}    enforce  $\bSigma_k=\bI$. We take  \algoName{SPSA} as an example.

\begin{enumerate}[(i)]
	\item \label{item:scaling}
	The estimate $\hbg_k\ilparenthesis{\hbtheta_k}$ using \algoName{SPSA} may not be robust to scaling, as every component of $\hbtheta_k$ is perturbed by  the \emph{same} magnitude of perturbation $\perturb_k$.    
	\item \label{item:correlation}
	The estimate $\hbg_k\ilparenthesis{\hbtheta_k}$ may not be robust to various correlations between different components of the parameter, as the perturbations  along all components  of $\btheta$ are \emph{independent} with each other. 
	
\end{enumerate}

As it turns out later   in Subsection~\ref{subsect:HARP}, a sensible  choice of $\bSigma_k$ is $ {\bH\ilparenthesis{\hbtheta_k}}$.  In this way,
\ref{item:scaling} can be resolved:  say,  $\loss\ilparenthesis{\btheta} = (100\uptheta_1^2+\uptheta_2^2)/2$, then a $ \bDelta_k $ with zero mean and a covariance of  $ \diag\ilparenthesis{0.01, 1} $ will \emph{on average} impose $10\%$ of the change magnitude in $\uptheta_2$ onto that of  $\uptheta_1$. Meanwhile, \ref{item:correlation} can be handled: say,   $ \loss\ilparenthesis{\btheta} = \ilparenthesis{\uptheta_1^2 + \uptheta_2^2 + \uptheta_1\uptheta_2}/2 $. When the direction of the gradient estimate, $\mapping_k\ilparenthesis{\bDelta_k}$, has a covariance of $ \begin{pmatrix}
1 &0.5 \\ 0.5 & 1
\end{pmatrix} $ will have heavier probability mass on $ \ilparenthesis{1,\,1}^\transpose $ and $ \ilparenthesis{-1,\,-1}^\transpose $ than on $ \ilparenthesis{-1, \, 1}^\transpose $ and $ \ilparenthesis{1,\,-1}^\transpose $.

    \subsection{Algorithm Description}\label{subsect:decription}  
Section~\ref{subsect:Motivation}  explains the   shortcomings \ref{item:scaling}--\ref{item:correlation} of using $\bSigma_k = \bI$   and illustrates the benefit of using $\bSigma_k = \bH\ilparenthesis{\hbtheta_k}$. Obviously, we cannot access $\bH\ilparenthesis{\hbtheta_k}$ in the black-box problem (\ref{eq:SOsetup}). We  consider constructing   estimate  for $\bH\ilparenthesis{\hbtheta_k}$ using ZO queries gathered in $\field_k$.  To form a $\field_k$-measurable second-order approximation,  \algoName{HARP}  is comprised of two recursions, one for $\btheta$ as in (\ref{eq:1stOrderSA}) and one for the Hessian $\bH\ilparenthesis{\btheta}$ as below:
 \begin{equation}\label{eq:HessianUpdate} 
 \begin{cases}
 & \hbH_k = \mappingPD_k\parenthesis{\obH_k  }\,,  \\
 &\obH_ 
 {k+1} =     \ilparenthesis{1-\weight_k} \obH_k + \weight_k  \set{ \mapping_k\ilparenthesis{\tbDelta_k} \ilbracket{\mapping_k\ilparenthesis{\bDelta_k}}^\transpose + \mapping_k\ilparenthesis{\bDelta_k} \ilbracket{\mapping_k\ilparenthesis{\tbDelta_k}}^\transpose }\bar{\ell}_k/\ilparenthesis{4\perturb_k\tperturb_k}\,. 
 \end{cases}
\end{equation} 
Here,  $\perturb_k$ and $\tperturb_k$  are the   differencing  magnitudes,    the $\Dimension$-dimensional random perturbation vectors $ \bDelta_k $ and $\tbDelta_k$
are assumed to be drawn from a distribution with $\zero$-mean and $\bSigma_k^{-1}$-covariance,  the mapping  $\mapping_k\ilparenthesis{\cdot}:\real^{\Dimension}\mapsto\real^{\Dimension}$ is odd,  and  $ \overline{\ell}_k = \ell\ilparenthesis{\hbtheta_k+\perturb_k\bDelta_k + \tperturb_k\tbDelta_k, \upomega_k^{+,+}} - \ell\ilparenthesis{\hbtheta_k + \perturb_k\bDelta_k,\upomega_k^+} - \ell\ilparenthesis{\hbtheta_k-\perturb_k\bDelta_k + \tperturb_k\tbDelta_k, \upomega_k^{-,+}} + \ell\ilparenthesis{\hbtheta_k-\perturb_k\bDelta_k,\upomega_k^+} $. The current estimate $\hbH_k$ and the smoothing (moving average) estimate  $\obH_k$ can be initialized as   the identity/scalar matrix.   The mapping      $ \mappingPD_k : \real^{\Dimension\times\Dimension} \mapsto\ilset{\text{positive definite matrices in }\real^{\Dimension\times\Dimension}} $ copes the potential nonpositive-definiteness of $\obH_k$. A valid choice for $ \mappingPD_k\ilparenthesis{\cdot} $ is $ \mappingPD_k\ilparenthesis{\bH} = \ilparenthesis{\bH ^\transpose\bH + \upvarepsilon_k\bI }^{\nicefrac{1}{2}} $ with $\upvarepsilon_k\to 0$, which can be implemented in $O(\Dimension^2)$ FLOPs \cite{zhu2019efficient}.  When $\tperturb_k = O(\perturb_k)$, and other gain sequence conditions are met, $\hbH_k $ approaches the Hessian evaluated  at the optimum at a rate no slower than $O(\perturb_k^2)$.

The detailed pseudo code   for  \algoName{HARP} is summarized in Algorithm~\ref{algo:HARP}. Readers are referred to our code hyperlinks in Section~\ref{sect:Numerical} and \cite{zhu2019efficient} for       implementation guidance.

\begin{algorithm2e}[!htbp]
	\caption{  Hessian-Amended Random Perturbation  (\href{https://github.com/jingyi-zhu/Fast2SPSA/tree/master/HARP}{GitHub})}
	\label{algo:HARP}
	\SetAlgoLined
	\KwResult{terminal estimate $\hbtheta_K$}
	initialization $\hbtheta_0$, $\hbH_0=\bI$, $\obH_0=\bI$, and  coefficients $ \gain_k, \perturb_k,\tperturb_k,\weight_k $ for $ 0\le k\le K $ \;
\remove{	\While{While condition}{
		instructions\;
		\eIf{condition}{
			instructions1\;
			instructions2\;
		}{
			instructions3\;
		}
	}}
	\For{$k = 0,1, \cdots K$}{
		generate $\bDelta_k$ from a distribution with a mean of $\zero$ and a covariance of $ \hbH_k ^{-1} $ and compute $ \mapping_k\ilparenthesis{\bDelta_k} =\hbH_k \bDelta_k $\;
		collect two ZO queries and estimate $\hbg_k\ilparenthesis{\hbtheta_k}$ via (\ref{eq:ZOgradientTwoMeasurements})\;
		update $\hbtheta_k$ using (\ref{eq:1stOrderSA})\;
		generate $\tbDelta_k$ from a distribution with a mean of $\zero$ and a covariance of $ \hbH_k ^{-1} $ and compute $ \mapping_k\ilparenthesis{\tbDelta_k} = \hbH_k \tbDelta_k $\;
		collect two additional ZO queries and estimate $\hbH_k $. \Comment{\cite[Algorithms 1--2]{zhu2019efficient} provides a way to achieve  $O(\Dimension^2)$ FLOPs. Other forms of $\mappingPD_k\ilparenthesis{\cdot}$ satisfying conditions in \cite{spall2000adaptive} also work.}
	}
\end{algorithm2e}

 \section{    Performance Metric}\label{sect:Metric}
 Before     analyzing  \algoName{HARP} listed  in Algorithm~\ref{algo:HARP}, let us   discuss the metric that evaluates the algorithm performance.  
 
 \subsection{    Convergence Mode }\label{subsect:mode}
 Now that all randomness in $\hbtheta_k$ stemming from $\Omega\times\Omega_{\bDelta}$, it is standard practice to measure the algorithmic performance of the recursions  (\ref{eq:1stOrderSA}) by showing 
 \begin{equation}\label{eq:StrongConvergence}
 \text{$\hbtheta_k$ converges almost surely (strongly) to $\bvartheta$\,, }
 (\hbtheta_k \stackrel{\mathrm{a.s.}}{\longrightarrow} \bvartheta)\,, 
 \end{equation}
 or \begin{equation}
 \label{eq:MeanSquaredConvergece} \text{$\hbtheta_k$ converges to $\bvartheta$ in mean-squared sense\,, } (\hbtheta_k \stackrel{\mathrm{m.s.}}{\longrightarrow} \bvartheta)\,.
 \end{equation} \citet{robbins1951stochastic} gave conditions for (\ref{eq:StrongConvergence}) whereas \citet{blum1954multidimensional} for (\ref{eq:MeanSquaredConvergece})\footnote{Neither (\ref{eq:StrongConvergence}) nor (\ref{eq:MeanSquaredConvergece}) implies the other \cite[Chap. 5]{billingsley2013convergence}. Both  (\ref{eq:StrongConvergence})  and  (\ref{eq:MeanSquaredConvergece})  imply  convergence in probability  and convergence in distribution.}. 
 We will prove (\ref{eq:StrongConvergence}) in Section~\ref{sect:Convergence}.

 \subsection{Rate of Convergence}\label{subsect:RateOfConvergence}
 When either   (\ref{eq:StrongConvergence}) or  (\ref{eq:MeanSquaredConvergece}) is 
 shown, finding the   rate of convergence naturally follows.  The asymptotic root-mean-squared (RMS) error  $ \ilbracket{\E\ilparenthesis{\norm{\hbtheta_k-\bvartheta}}^2}^{\nicefrac{1}{2}} $ of the underlying estimate $\hbtheta_k$ is a sensible measure  of the   distance between the     $\hbtheta_k$ and     $\bvartheta$ average across all sample paths.  Therefore, we aim to find the smallest  upper bound $\uptau^*$ such that  $ k^{\nicefrac{\uptau_0}{2}}\ilparenthesis{\hbtheta_k-\bvartheta} = O_P(1)$ for all $ \uptau_0 \le \uptau^*$, which is formalized as:  
 \begin{equation}\label{eq:NormalityProblem0}
 \begin{cases}
 &
 \max_{\hyperPara} \,\, \uptau  \,,\\ 
 &  \text{s.t. }     \text{random   sequence   $ \ilparenthesis{\hbtheta_k-\bvartheta} $ is $ O_P(k^{-\nicefrac{\uptau}{2}}) $}\,,
 \end{cases}
 \end{equation} 
 where the hyperparameter set $\hyperPara $ includes all the controllable stepsizes, and   both $\uptau$ and $O_P(1)$ are functions of   $\hyperPara$.   Thanks to the algorithmic form (\ref{eq:1stOrderSA}),  the decomposition  (\ref{eq:gDecomposition}), and \cite[Sect. 27]{billingsley2008probability}, the   constraint  in (\ref{eq:NormalityProblem0}) always takes the following form: 
 \begin{align} & 
 k^{\nicefrac{\uptau}{2}} \ilparenthesis{\hbtheta_k-\bvartheta}  
 \stackrel{\mathrm{dist.}}{\longrightarrow} \mathcal{N}\ilparenthesis{\bmu, \bB}   \text{ for   finite }\bmu, \bB\succ\zero\,, \,\, \label{eq:NormalityProblem}
 \end{align} 
 where $ \stackrel{\mathrm{dist.}}{\longrightarrow} $ represents  ``convergence in distribution,'' and  $ \ilparenthesis{\uptau, \bmu,\bB} $   are functions of $\hyperPara$.  When  (\ref{eq:NormalityProblem}) holds and  $ \ilbracket{k^{\nicefrac{\uptau}{2}}  \ilparenthesis{\hbtheta_k-\bvartheta} }  $ is uniformly  integrable for any $\uptau \le  \uptau^*$,  (\ref{eq:MeanSquaredConvergece}) holds.    The      RMS error  is asymptotic to  $\lim_{k\to\infty}\ilbracket{ \E\ilparenthesis{\norm{\hbtheta_k-\bvartheta} ^2}}^{\nicefrac{1}{2}} =  k^{-\nicefrac{\uptau}{2}}   \ilbracket{\norm{\bmu}^2 + \tr\ilparenthesis{\bB }}$.

 \subsubsection{Further Remarks on RMS}\label{subsect:Convergence} To minimize the RMS, it makes more sense to perform 
 \begin{equation}\label{eq:RMSproblem}
 \min_{\hyperPara} \set{k^{-\nicefrac{\uptau}{2}} \bracket{\norm{\bmu }^2+\tr\ilparenthesis{\bB}}}\,,  
 \end{equation}  as opposed to (\ref{eq:NormalityProblem0}).  When $k$ is small, the finite constant $ \ilbracket{\norm{\bmu}^2+\tr\ilparenthesis{\bB}} $ that are hidden from the  big-$O$ notation $O(k^{-\nicefrac{\uptau}{2}})$ can be dominating. For sufficiently large $k$,  the effect of the scaling coefficients dies down, and     (\ref{eq:RMSproblem}) reduces to       (\ref{eq:NormalityProblem0}). Sections~\ref{sect:IID}--\ref{sect:CRN} show  that the solution to (\ref{eq:NormalityProblem0}) is
 \begin{numcases}{\uptau^* = }
 \nicefrac{2}{3}\,, &  \text{ for IID noise}\,, \label{eq:OptimalRateCMC}  \\
 1\,, & \text{ for CRN noise}\,, \label{eq:OptimalRateCRN}
 \end{numcases}
 when $\loss\ilparenthesis{\cdot}$ is
 is non-quadratic\footnote{For a  quadratic function $\loss\ilparenthesis{\cdot}$,  $\uptau^*=1$ for both IID and CRN noise.  } and    
 three-times\footnote{For a function $\loss\ilparenthesis{\cdot}$ that is \(p\)-times continuously differentiable for odd $p$,  the fastest rate     for the RMS  is $ O\ilparenthesis{k^{-\nicefrac{\ilparenthesis{p-1}}{2p}}} $, which goes to $ O(k^{-\nicefrac{1}{2}}) $ as $p\to\infty$ \cite{fabian1971stochastic}.
 } continuously differentiable.

 \subsubsection{Iteration and Query Complexity}
 The complexity analysis for (\ref{eq:1stOrderSA}) is straightforward when the RMS metric (\ref{eq:RMSproblem}) is in use. 
 To achieve  
 \begin{equation}\label{eq:accurateSolution}
 \accuracy\text{-accurate estimate }\hbtheta_k\text{ s.t. } \ilbracket{ \E\ilparenthesis{\norm{\hbtheta_k-\bvartheta} ^2}}^{\nicefrac{1}{2}} \le \accuracy\,, 
 \end{equation}
 the the \emph{average} desired number of iteration is 
 \begin{equation}
 \label{eq:iterationComplexity} \set{ \nicefrac{\ilbracket{\norm{\bmu}^2+\tr\ilparenthesis{\bB}}}{\accuracy} }^{\nicefrac{2}{\uptau^*}} = \begin{cases}
 O(\accuracy^{-3})\,, & \text{   IID noise\,,}\\
 O(\accuracy^{-2})\,, & \text{   CRN noise\,.}
 \end{cases}
 \end{equation}
 
 \begin{rem}\label{rem:MultipleQueries}
 	When  (\ref{eq:1stOrderSA})    takes a fixed number, say $2\queryNum$, of ZO queries, and $\queryNum$ is independent from the parameter dimension $\Dimension$,  
 	then the corresponding query complexity is  
 	\begin{equation}
 	\label{eq:queryComplexity} 2 \queryNum
 	\set{ \nicefrac{\ilbracket{\norm{\bmu}^2+\tr\ilparenthesis{\bB/\queryNum}}}{\accuracy} }^{\frac{2}{\uptau^*}} = \begin{cases}
 	O(\accuracy^{-3}), & \text{IID noise,}\\
 	O(\accuracy^{-2}), & \text{CRN noise.}
 	\end{cases}
 	\end{equation}
 \end{rem}

 \subsection{Other Forms of    ``Convergence''  Rate }\label{subsect:Nesterov}

 \cite[Sect. 4]{nesterov2017random} 
 uses the following notion   
 \begin{equation}\label{eq:accurateSolution-Nesterov}
 \accuracy\text{-accurate estimate }\hbtheta_k\text{ s.t. }  \E \ilbracket{\loss\ilparenthesis{\hbtheta_k} - \loss\ilparenthesis{\bvartheta}}\le \accuracy\,, 
 \end{equation}
 as opposed to (\ref{eq:accurateSolution}), and (\ref{eq:accurateSolution-Nesterov}) is popular for analyzing ZO algorithms    \cite{ghadimi2013stochastic}. 
 Let us offer a few remarks on the differences between (\ref{eq:accurateSolution})  and (\ref{eq:accurateSolution-Nesterov}).  First of all, the  resultant ``convergence'' rate under the notion (\ref{eq:accurateSolution-Nesterov}) require \emph{non}-decaying rate.   \cite[Chap. 4]{zhu2020error}
 points out that $\hbtheta_k$ will \emph{not} converge to $\bvartheta$ in standard statistical sense (either a.s. or m.s. in Subsection~\ref{subsect:mode}) when $\gain_k\not\to 0$. In fact, there is no ``convergence'' per se \cite{zhu2020stochastic}, as $\hbtheta_k$ will be ``random-walking'' within a  neighborhood of $\bvartheta$ even for sufficiently large $k$ \cite{zhu2018probabilistic}. 
 Second, \cite{nesterov2017random,ghadimi2013stochastic} and all the subsequent work on ZO algorithms require     \emph{additive} CRN noise, and the corresponding analysis   can \emph{not} be generalized to the general CRN noise case discussed in Section~\ref{sect:CRN}, not to mention  the IID noise case in Section~\ref{sect:IID}. 
 Third, the complexity result \cite[Eq. (59)]{nesterov2017random} does not reveal the eigen-structure of $\bH\ilparenthesis{\cdot}$ under certain smoothness assumption. On the contrary, $\bB$ in (\ref{eq:NormalityProblem}) conveys all the eigen-information of $\bH\ilparenthesis{\bvartheta}$, as we shall see momentarily. It   makes more  sense that the RMS should be larger for ill-conditioned problems compared with  well-conditioned problems.  
 Last but not least, $ \ilbracket{ \E\ilparenthesis{\norm{\hbtheta_k-\bvartheta} ^2}}^{\nicefrac{1}{2}} \le \accuracy  $ implies $\E \ilbracket{\loss\ilparenthesis{\hbtheta_k} - \loss\ilparenthesis{\bvartheta}} \le \accuracy'$, but generally not the other way around. 
 
 Overall, the notion (\ref{eq:accurateSolution-Nesterov}) and the analysis in 
 \cite{nesterov2017random,ghadimi2013stochastic}  are useful when (i) \emph{additive} CRN noise scenario is possible, and (ii) the experimenter   aims  to report  an acceptable output within the neighborhood of $\bvartheta$  given a limited iteration/query complexity. In fact, the non-decaying gain does provide better performance under a  budget-limited context \cite{zhu2020stochastic,zhu2016tracking}. Finally, it is advisable to use ``concentration'' and ``concentration rate''   \cite[Chaps. 7--8]{kushner2003stochastic}. 
 
 \subsection{  Dependency on Dimensionality $ \Dimension $}\label{subsect:Dimensionality}
 When dimensionality $\Dimension$  varies as the recursion goes on, e.g., $\Dimension$ plays an important role in structural optimization and etc., it is advisable to include the dependency on $\Dimension$ in the constraint of (\ref{eq:NormalityProblem0}) as ``random vector sequence   $ \ilparenthesis{\hbtheta_k-\bvartheta} $ is $ O_P(\Dimension k^{-\nicefrac{\uptau}{2}}) $.'' 
 Nevertheless,
 we decide to omit $\Dimension$  
 for clarity and for the reason that the problem dimension $\Dimension$ is \emph{generally} not an adjustable\footnote{
 	This contrasts with $k$ and $\accuracy$, both of which can be  selected  by the experimenter. }. Moreover, reducing the order of $\Dimension$  appearing in the convergence rate  is only possible when certain sparsity conditions are imposed  or certain sparsity-promoting regularization is added to the loss function.

 \section{Convergence Result}\label{sect:Convergence}
 
 \subsection{IID Scenario }
 \label{sect:IID} Overall,  
 the \emph{fastest} rate   of  the RMS error $  \ilparenthesis{ \E\ilbracket{ \norm{\hbtheta_k-\bvartheta}^2 } }^{\nicefrac{1}{2}} $ under IID noise   is 
 $O(k^{-\nicefrac{1}{3}})$,  
 which is attained when  $\gain_k =O(k^{-1})$ and $\perturb_k= O(k^{-\nicefrac{1}{6}})$.  (\ref{eq:OptimalRateCMC}) is inherently slower than (\ref{eq:OptimalRateCRN}), due to the trade-off  between   the bias magnitude $\E  \norm{\bias_k\ilparenthesis{\hbtheta_k}} $ and the variance $ \E\norm{\noise_k\ilparenthesis{\hbtheta_k}}^2 $ of the noise, which  is summarized in Lemma~\ref{lem:Tradeoff} below.

 As pointed out in  Subsection~\ref{subsect:Convergence}, not only the rate itself but also the scaling coefficient play a role  in  the algorithmic performance. This section  first show the a.s. convergence of the estimate $\hbtheta_k$ generated from (\ref{eq:1stOrderSA}) when the covariance of the perturbation sequence may be varied, and then    
 discuss the impact of the perturbation covariance on the finite constant $ \ilbracket{\norm{\bmu}^2+\tr\ilparenthesis{\bB}} $.

 \subsubsection{Order of Bias and Variance of  $\hbg_k\ilparenthesis{\hbtheta_k}$ }
 Let us first discuss the bias-variance trade-off in $\hbg_k\ilparenthesis{\hbtheta_k}$ for IID noise. Several assumptions are imposed on the underlying loss function $\loss\ilparenthesis{\cdot}$, the procedure to generate random perturbation $\bDelta_k$, especially the $\field_k$-measurable covariance  matrix  $\bSigma_k$, and the observation noise  $ \upvarepsilon_{k}^{\pm}\equiv \ell\ilparenthesis{\hbtheta_k\pm\perturb_k\bDelta_k,\upomega_k^{\pm}} - \loss\ilparenthesis{\hbtheta_k\pm\perturb\bDelta_k} $.

 \begin{assumeA}
 	[Loss Function] \label{assume:Loss} Assume  that  there exists some $K$, such that  for  $k\ge K$, 
 	$ \loss^{(3)} \ilparenthesis{\btheta} $
 	evaluated   for all $\btheta$ in an open neighborhood of $\hbtheta_k$ exists continuously and $ \norm{ \loss^{(3)} \ilparenthesis{\btheta}}_\infty \le  \BoundThirdOrder $   almost surely (a.s.). 
 \end{assumeA}
 \begin{assumeA}
 	[Perturbation] \label{assume:Perturbation} Assume that the perturbation sequence $ \ilset{\bDelta_k} $ are independently distributed  with a mean of $\zero$ and a covariance matrix $ \bSigma_k^{-1} $. Meanwhile, the mapping $ \mapping_k\ilparenthesis{\cdot} $   is an  odd function. Moreover,
 	both $\bDelta_k$ and $\mapping_k\ilparenthesis{\bDelta_k}$ are independent of $ \hbtheta_k $.  Finally, assume that $ \E_k \ilbracket{\mapping_k\ilparenthesis{\bDelta_k} \bDelta_k} \stackrel{\mathrm{a.s.}}{=} \bI $ and  $ \E_k\ilbracket{\norm{\bDelta_k}^6 \norm{\mapping_k\ilparenthesis{\bDelta_k}} ^2} \stackrel{\mathrm{a.s.}}{\le } \BoundPerturbation $ uniformly for all $k$.
 \end{assumeA}

 \begin{assumeA}
 	[IID] \label{assume:Noise}  Assume $ \E\ilbracket{\given{\upvarepsilon_k^{+} - \upvarepsilon_k^{-}}{\hbtheta_k,\bDelta_k}} \stackrel{\mathrm{a.s.}}{=} 0 $,  and $ \E\ilbracket{\given{\ilparenthesis{\upvarepsilon_k^{+} - \upvarepsilon_k^{-}}^2}{\hbtheta_k,\bDelta_k}} \stackrel{\mathrm{a.s.}}{\le }\BoundNoise $  uniformly for all $k$. 
 \end{assumeA}

 \begin{lem}
 	\label{lem:Tradeoff} When assumptions  A.\ref{assume:Loss}, A.\ref{assume:Perturbation}, and A.\ref{assume:Noise} hold, 
 	\begin{align}
 	&  	\bias_k\ilparenthesis{\hbtheta_k}\stackrel{\mathrm{a.s.}}{=}  \frac{\perturb_k^2}{12}\E _k \set{ \ilbracket{ \loss^{(3)} \ilparenthesis{\overline{\btheta}_k^+} + \loss^{(3)} \ilparenthesis{\overline{\btheta}_k^{-}} } \ilparenthesis{\bDelta_k\otimes\bDelta_k\otimes\bDelta_k} \mapping_k\ilparenthesis{\bDelta_k}  }, \label{eq:bias} \\
 	&  \noise_k\ilparenthesis{\hbtheta_{k}}   \stackrel{\mathrm{a.s.}}{=}  \frac{\ilparenthesis{\upvarepsilon_k^{+} - \upvarepsilon_k^{-}}}{2\perturb_k}\mapping_k\ilparenthesis{\bDelta_k}  +  \bracket{\mapping_k\ilparenthesis{\bDelta_k} \bDelta_k^\transpose-\bI} \bg\ilparenthesis{\hbtheta_k}  \nonumber
 	\\
 	& \,\, \quad\quad \quad \quad + \frac{\perturb_k^2}{12}   \ilbracket{ \loss^{(3)} \ilparenthesis{\overline{\btheta}_k^+} + \loss^{(3)} \ilparenthesis{\overline{\btheta}_k^{-}} } \ilparenthesis{\bDelta_k\otimes\bDelta_k\otimes\bDelta_k} \mapping_k\ilparenthesis{\bDelta_k} -  \bias_k\ilparenthesis{\hbtheta_k} , \label{eq:noise}
 	\end{align}
 	where  $\overline{\btheta}_k^{\pm}$ is some convex combination of $\hbtheta_k$ and  $ \ilparenthesis{\hbtheta_k\pm\perturb_k\bDelta_k} $.  Overall,
 	the magnitude of  the bias term  $\E _k\norm{ \bias_k\ilparenthesis{\hbtheta_k} }$    is  $ O(\perturb_k^2) $, and the second-moment of the noise term  $ \E_k\ilbracket{\norm{\noise_k}^2} $ is $ O(\perturb_k^{-2}) $.
 \end{lem}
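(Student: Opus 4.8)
The plan is to split the oracle calls into a deterministic part and an observation-noise part, Taylor-expand the former, and then read off $\bias_k(\hbtheta_k)$ and $\noise_k(\hbtheta_k)$ from the conditional expectation. First I would substitute $\ell(\hbtheta_k \pm \perturb_k\bDelta_k, \upomega_k^\pm) = \loss(\hbtheta_k \pm \perturb_k\bDelta_k) + \upvarepsilon_k^\pm$ into (\ref{eq:ZOgradientTwoMeasurements}), which isolates the observation-noise contribution $(\upvarepsilon_k^+-\upvarepsilon_k^-)\mapping_k(\bDelta_k)/(2\perturb_k)$. Using A.\ref{assume:Loss} I would then apply a third-order Taylor expansion with Lagrange remainder to $\loss(\hbtheta_k \pm \perturb_k\bDelta_k)$ about $\hbtheta_k$: the even-order (zeroth and second) terms cancel in the central difference, the first-order term leaves $\bg(\hbtheta_k)^\transpose\bDelta_k$, and the third-order terms survive with intermediate points $\overline{\btheta}_k^\pm$ on the two segments. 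Dividing by $2\perturb_k$, multiplying by $\mapping_k(\bDelta_k)$, and rewriting the scalar product as $(\bg(\hbtheta_k)^\transpose\bDelta_k)\mapping_k(\bDelta_k) = \mapping_k(\bDelta_k)\bDelta_k^\transpose\bg(\hbtheta_k)$ produces the identity
\[
\hbg_k(\hbtheta_k) = \mapping_k(\bDelta_k)\bDelta_k^\transpose\bg(\hbtheta_k) + \frac{\perturb_k^2}{12}\big[\loss^{(3)}(\overline{\btheta}_k^+) + \loss^{(3)}(\overline{\btheta}_k^-)\big](\bDelta_k\otimes\bDelta_k\otimes\bDelta_k)\mapping_k(\bDelta_k) + \frac{\upvarepsilon_k^+-\upvarepsilon_k^-}{2\perturb_k}\mapping_k(\bDelta_k).
\]

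Next I would take $\E_k[\cdot]$ term by term. By A.\ref{assume:Perturbation}, $\E_k[\mapping_k(\bDelta_k)\bDelta_k^\transpose] = \bI$, so the first term contributes exactly $\bg(\hbtheta_k)$; conditioning further on $\bDelta_k$ and invoking A.\ref{assume:Noise} (so that $\E[\upvarepsilon_k^+-\upvarepsilon_k^- \mid \hbtheta_k,\bDelta_k]=0$) together with the tower property makes the observation-noise term mean-zero. Hence $\bias_k(\hbtheta_k) = \E_k[\hbg_k(\hbtheta_k)] - \bg(\hbtheta_k)$ is exactly the conditional expectation of the surviving third-order term, which is (\ref{eq:bias}). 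Subtracting $\E_k[\hbg_k(\hbtheta_k)] = \bg(\hbtheta_k) + \bias_k(\hbtheta_k)$ from the displayed identity yields $\noise_k(\hbtheta_k)$ as the three random terms minus their means, namely $[\mapping_k(\bDelta_k)\bDelta_k^\transpose - \bI]\bg(\hbtheta_k)$, the raw third-order term, and the observation-noise term, less $\bias_k(\hbtheta_k)$; this is (\ref{eq:noise}).

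For the orders, the bias is controlled by bounding $|\loss^{(3)}(\cdot)(\bDelta_k\otimes\bDelta_k\otimes\bDelta_k)| \le \BoundThirdOrder\norm{\bDelta_k}_1^3 \le \BoundThirdOrder\,\Dimension^{3/2}\norm{\bDelta_k}^3$ via A.\ref{assume:Loss}, so that $\E_k\norm{\bias_k(\hbtheta_k)}$ is at most a constant multiple of $\perturb_k^2\,\E_k[\norm{\bDelta_k}^3\norm{\mapping_k(\bDelta_k)}]$; Cauchy--Schwarz against the bound $\E_k[\norm{\bDelta_k}^6\norm{\mapping_k(\bDelta_k)}^2]\le\BoundPerturbation$ of A.\ref{assume:Perturbation} gives $\E_k[\norm{\bDelta_k}^3\norm{\mapping_k(\bDelta_k)}]\le\sqrt{\BoundPerturbation}$, hence $\E_k\norm{\bias_k(\hbtheta_k)}=O(\perturb_k^2)$. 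For the noise, applying $\norm{a+b+c}^2\le 3(\norm{a}^2+\norm{b}^2+\norm{c}^2)$ to (\ref{eq:noise}), the curvature term has second moment $O(\perturb_k^4)$ (again by $\BoundPerturbation$), the gradient-mismatch term has second moment $O(1)$, and the dominant observation-noise term obeys
\[
\E_k\Big[\,\Big\|\tfrac{\upvarepsilon_k^+-\upvarepsilon_k^-}{2\perturb_k}\mapping_k(\bDelta_k)\Big\|^2\Big] = \frac{1}{4\perturb_k^2}\,\E_k\big[(\upvarepsilon_k^+-\upvarepsilon_k^-)^2\norm{\mapping_k(\bDelta_k)}^2\big] \le \frac{\BoundNoise}{4\perturb_k^2}\,\E_k\norm{\mapping_k(\bDelta_k)}^2 = O(\perturb_k^{-2}),
\]
by A.\ref{assume:Noise}, so that $\E_k\norm{\noise_k(\hbtheta_k)}^2 = O(\perturb_k^{-2})$.

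I expect the main obstacle to be the moment bookkeeping rather than the expansion. Two points need care. First, the third-order remainder is a random tensor through the intermediate points $\overline{\btheta}_k^\pm$, so A.\ref{assume:Loss}'s uniform bound must be applied pointwise before the expectation is taken, and the nested conditioning (the coarse $\field_k$ versus the finer conditioning on $\bDelta_k$) must be tracked so that A.\ref{assume:Noise} is invoked at the right level. Second, the noise second moment needs $\E_k\norm{\mapping_k(\bDelta_k)}^2$ and $\E_k[\norm{\mapping_k(\bDelta_k)}^2\norm{\bDelta_k}^2]$ to be uniformly bounded, which are \emph{not} literally the sixth-order moment supplied by A.\ref{assume:Perturbation}; these lower moments have to be secured either from the operative structure $\mapping_k(\bDelta_k)=\hbH_k\bDelta_k$ with a uniformly bounded Hessian estimate or as additional consequences of the perturbation law, and this is the step I would scrutinize most closely.
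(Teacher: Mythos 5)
Your proposal is correct and follows essentially the same route as the paper's proof: the same substitution $\ell = \loss + \upvarepsilon$ isolating the observation noise, the same third-order Taylor expansion with mean-value remainder (odd terms surviving, even terms cancelling), the same use of $\E_k[\mapping_k(\bDelta_k)\bDelta_k^\transpose]=\bI$ and of the tower property with A.3 to kill the noise mean, and equivalent moment bookkeeping for the $O(\perturb_k^2)$ and $O(\perturb_k^{-2})$ orders (the paper computes $\E_k\norm{\hbg_k(\hbtheta_k)}^2$ and subtracts the squared conditional mean, which is the same accounting as your triangle-inequality split of (\ref{eq:noise})). The caveat you flag at the end is genuine but applies equally to the paper itself, which asserts $\E_k\norm{\mapping_k(\bDelta_k)}^2 \le \BoundPerturbation$ citing only A.2 although the sixth-moment bound there does not literally imply it; under the operative choice $\mapping_k(\bDelta_k)=\hbH_k\bDelta_k$ with covariance $\hbH_k^{-1}$ one has $\E_k\norm{\mapping_k(\bDelta_k)}^2=\tr(\hbH_k)$, so the bound holds whenever $\hbH_k$ stays bounded, and your explicit Cauchy--Schwarz step giving $\E_k[\norm{\bDelta_k}^3\norm{\mapping_k(\bDelta_k)}]\le\BoundPerturbation^{\nicefrac{1}{2}}$ is in fact more careful than the constant $\BoundThirdOrder\BoundPerturbation$ stated in the paper's display.
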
 
 The difficulty in tuning $\perturb_k$ stems  from the  trade-off between the bias term $ O(\perturb_k^2) $ and the variance term $ O(\perturb_k^{-2}) $.  
 \paragraph{Discussion on A.\ref{assume:Loss}} 
 \label{para:relaxedThrice}
 The $O(\perturb_k^2)$ bias and $O(\perturb_k^{-2})$ variance in Lemma~\ref{lem:Tradeoff} remain valid when the ``three-times continuously differentiablility'' in A.\ref{assume:Loss} is changed to ``twice-continuously differentiablility and \emph{Lipschitz} Hessian.'' Under such condition,
 we may still  obtain $ \E_k \norm{\bias_k\ilparenthesis{\hbtheta_k} }= O(\perturb_k^2) $ and $ \E_k\ilbracket{\norm{\noise_k\ilparenthesis{\hbtheta_k}}} = O(\perturb_k^{-2})  $. 
 
  \subsubsection{Almost Surely  Convergence}

 Several additional assumptions 
 are imposed to facilitate the strong convergence.

 \begin{assumeA}
 	[Iterate Boundedness and ODE Condition] \label{assume:ODE} Assume $ \norm{\hbtheta_k}\stackrel{\mathrm{a.s.}}{<}\infty $  for all $k$.  Also assume  that $ \bvartheta $ is an asymptotically stable solution of the differential equation $ \diff \bx\ilparenthesis{t}/\diff t = - \bg\ilparenthesis{\bx} $, whose solution under initial condition $\bx_0$ will be denoted as $ \bx\ilparenthesis{\given{t}{\bx_0}}$.  Moreover, let $D\ilparenthesis{\bvartheta}\equiv \ilset{\bx_0: \lim_{t\to\infty} \bx\ilparenthesis{\given{t}{\bx_0} } = \bvartheta}$. Further assume that $ \hbtheta_k $ falls within some compact subset of $D\ilparenthesis{\bvartheta}$ infinitely often for almost all sample points. 
 \end{assumeA}

 \begin{assumeAprime}{4'}[Unique Minimum]\label{assume:Convexity} 
 	Assume that $\bvartheta$ is the unique minimizer such that 
 	$ \sup\{\norm{\btheta}: \loss\ilparenthesis{\btheta} \le \loss\ilparenthesis{\bvartheta} + \constNum_1\} < \infty $ for every $\constNum_1>0$, 
 	$ \inf_{\norm{\btheta-\bvartheta}>\constNum_2}\ilbracket{\loss\ilparenthesis{\btheta}-\loss\ilparenthesis{\bvartheta}}>0 $ for every $ \constNum_2>0$,  $ \inf_{\norm{\btheta-\bvartheta}>\constNum_3} \norm{\bg\ilparenthesis{\btheta}}>0 $  for every $\constNum_3>0$.    Moreover,
 	there exists some $K$, such that for $k\ge K$, $ \bH\ilparenthesis{\cdot } $ satisfies  $\norm{ \bH\ilparenthesis{\btheta}}_{\infty} < \BoundSecondOrder $ for all $\btheta$ in an open neighborhood of $\hbtheta_k$ a.s. 
 \end{assumeAprime}

 \begin{assumeA}
 	[Stepsize] \label{assume:Stepsize} $ \gain_k>0 $, $ \perturb_k>0 $, $ \gain_k\to 0 $, $ \perturb_k\to 0 $, $ \sum_k\gain_k = \infty $, $ \sum_k\gain_k^2\perturb_k^{-2}<\infty $.\end{assumeA}
 \begin{thm}[Almost Surely Convergence]\label{thm:StrongConvergence} Under the assumptions 
 	A.\ref{assume:Loss}, A.\ref{assume:Perturbation}, A.\ref{assume:Noise} (as in Lemma~\ref{lem:Tradeoff}), along with A.\ref{assume:ODE} and A.\ref{assume:Stepsize}, we have $ \hbtheta_k \stackrel{k\to\infty}{\longrightarrow} \bvartheta $ a.s. 
 \end{thm}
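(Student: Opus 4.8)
The plan is to recast the recursion (\ref{eq:1stOrderSA}) into the standard Robbins--Monro form and invoke the ODE (Kushner--Clark) method. First I would substitute the decomposition (\ref{eq:gDecomposition}) into (\ref{eq:1stOrderSA}) to obtain
\[
\hbtheta_{k+1} = \hbtheta_k - \gain_k\,\bg\ilparenthesis{\hbtheta_k} - \gain_k\,\bias_k\ilparenthesis{\hbtheta_k} - \gain_k\,\noise_k\ilparenthesis{\hbtheta_k}\,,
\]
so that $-\bg$ plays the role of the driving vector field, $\bias_k$ is a vanishing $\field_k$-measurable perturbation, and $\noise_k$ is a martingale-difference noise, since $\E_k\ilbracket{\noise_k\ilparenthesis{\hbtheta_k}} = \zero$ by construction. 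The remaining work is to verify the hypotheses of the Kushner--Clark theorem: continuity of $\bg$ (immediate from A.\ref{assume:Loss}), asymptotic negligibility of the accumulated noise, and asymptotic negligibility of the bias, after which A.\ref{assume:ODE} supplies the ODE stability and recurrence conditions that pin the limit down to $\bvartheta$.

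The crux is controlling the accumulated noise. Since $\noise_k$ is a martingale difference, the partial sums $M_k \equiv \sum_{j=1}^{k}\gain_j\,\noise_j\ilparenthesis{\hbtheta_j}$ form a martingale. Lemma~\ref{lem:Tradeoff} gives $\E_k\ilbracket{\norm{\noise_k}^2} = O\ilparenthesis{\perturb_k^{-2}}$, so the sum of conditional second moments obeys $\sum_j \gain_j^2\,\E_j\ilbracket{\norm{\noise_j}^2} = O\ilparenthesis{\sum_j \gain_j^2\perturb_j^{-2}}$, which is finite by A.\ref{assume:Stepsize}; the iterate boundedness in A.\ref{assume:ODE} keeps $\norm{\bg\ilparenthesis{\hbtheta_k}}$ finite along sample paths, so the implied constant is a.s. bounded. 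The $L^2$ martingale convergence theorem (applied after localizing on the a.s.-bounded event guaranteed by A.\ref{assume:ODE}) then shows that $M_k$ converges a.s. to a finite limit, whence the tail sums $\sum_{j\ge n}\gain_j\,\noise_j \stackrel{\mathrm{a.s.}}{\longrightarrow}\zero$ as $n\to\infty$, which is exactly the noise condition required by Kushner--Clark.

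The bias is handled directly. By (\ref{eq:bias}) together with A.\ref{assume:Loss} and A.\ref{assume:Perturbation}, $\norm{\bias_k\ilparenthesis{\hbtheta_k}} = O\ilparenthesis{\perturb_k^2}\stackrel{\mathrm{a.s.}}{\longrightarrow}\zero$ since $\perturb_k\to 0$, so over any finite ODE-time window the accumulated contribution $\sum\gain_k\,\bias_k$ is negligible. With the continuous field $-\bg$, the vanishing noise tails, the vanishing bias, and the hypotheses of A.\ref{assume:ODE}---that $\bvartheta$ is an asymptotically stable equilibrium of $\diff\bx\ilparenthesis{t}/\diff t = -\bg\ilparenthesis{\bx}$ and that $\hbtheta_k$ lies in a compact subset of its domain of attraction infinitely often---the Kushner--Clark theorem guarantees that the interpolated trajectory asymptotically tracks the ODE, yielding $\hbtheta_k\stackrel{k\to\infty}{\longrightarrow}\bvartheta$ a.s.

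The main obstacle I anticipate is the rigorous verification of the noise condition. One must confirm that the $O\ilparenthesis{\perturb_k^{-2}}$ bound on $\E_k\ilbracket{\norm{\noise_k}^2}$ holds with a constant that is a.s. uniformly bounded, despite its dependence on $\norm{\bg\ilparenthesis{\hbtheta_k}}$ through the term $\ilbracket{\mapping_k\ilparenthesis{\bDelta_k}\bDelta_k^\transpose - \bI}\bg\ilparenthesis{\hbtheta_k}$ in (\ref{eq:noise}); here the a.s. boundedness of $\hbtheta_k$ from A.\ref{assume:ODE}, combined with a stopping-time localization, is essential for converting the random pathwise bound into a genuine $L^2$ martingale argument. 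A related delicacy is that the possibly random covariance $\bSigma_k$ makes $\bDelta_k$ only conditionally (not unconditionally) mean-zero, so the martingale-difference structure must be argued entirely through the conditional expectations $\E_k$; the IID assumption A.\ref{assume:Noise} is what secures the zero-mean, bounded-variance measurement contribution $\ilparenthesis{\upvarepsilon_k^{+}-\upvarepsilon_k^{-}}\mapping_k\ilparenthesis{\bDelta_k}/\ilparenthesis{2\perturb_k}$ and hence the martingale property.
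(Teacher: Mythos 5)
Your proposal is correct and follows essentially the same route as the paper: both invoke the Kushner--Clark ODE theorem under A.\ref{assume:ODE}, dispose of the bias via the $O(\perturb_k^2)$ bound from Lemma~\ref{lem:Tradeoff}, and control the accumulated noise through the martingale structure of $\noise_k$ together with $\sum_k \gain_k^2\perturb_k^{-2}<\infty$ from A.\ref{assume:Stepsize}. The only (immaterial) variation is that the paper verifies the noise condition directly via a martingale maximal/Markov inequality on the tail sums $\sum_{i\ge k}\gain_i\noise_i$, whereas you obtain it from the $L^2$ martingale convergence theorem with a localization on the a.s.-bounded event---an equivalent device, and your explicit remark that the $O(\perturb_k^{-2})$ constant depends on $\norm{\bg\ilparenthesis{\hbtheta_k}}$ and must be tamed by the iterate boundedness in A.\ref{assume:ODE} is a point the paper's proof passes over silently.
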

 
 \begin{thmPrime}{1'}[Almost Surely Convergence] \label{thm:StrongConvergence-2}
 	Under A.\ref{assume:Loss}, A.\ref{assume:Perturbation}, A.\ref{assume:Noise}, along with A.\ref{assume:Convexity} and A.\ref{assume:Stepsize}, we have
 	\begin{enumerate}[i)]
 		\item \label{item:ub} $ \norm{\hbtheta_k}\stackrel{\mathrm{a.s.}}{<}\infty  $  for all $k$.  
 		\item \label{item:as} $ \hbtheta_k \stackrel{k\to\infty}{\longrightarrow} \bvartheta $ a.s. 
 	\end{enumerate}
 \end{thmPrime}
 
 \paragraph{Discussion  on A.\ref{assume:ODE} and A.\ref{assume:Convexity}}\label{para:ODE-Convex}
 First of all, note that neither  A.\ref{assume:ODE}   nor A.\ref{assume:Convexity} implies    the other. Moreover, $\bH\ilparenthesis{\cdot}$ being strongly convex is a \emph{sufficient} condition  for both  A.\ref{assume:ODE}   and A.\ref{assume:Convexity}. Nonetheless, strong convexity is \emph{not} a \emph{necessary} condition  for either A.\ref{assume:ODE}   and A.\ref{assume:Convexity}. Therefore, both Theorem~\ref{thm:StrongConvergence} and Theorem~\ref{thm:StrongConvergence-2} imply a.s. convergence when $\loss\ilparenthesis{\cdot}$ is strongly convex, but they also imply  the a.s. convergence result for functions that are  more complicated  beyond strongly convex functions.   
 \cite[pp. 40--41]{kushner1978stochastic} discusses why the iterate-boundedness in A.\ref{assume:ODE} \emph{may} not  not a restrictive condition and could be expected to hold in most applications.

 \subsubsection{Asymptotic Normality }
 Additional assumptions are    needed to facilitate the weak convergence result.  
 \begin{assumeA}
 	[Additional Conditions on Perturbation and Noise] \label{assume:AdditionalNormality}Assume that there exists a $\bSigma\succ \zero$ such that $ \bSigma_k\stackrel{k\to\infty}{\longrightarrow} \bSigma $. There exists some $\constNum_4>0$ such that  $ \E_k  \ilbracket{ \norm{\mapping_k\ilparenthesis{\bDelta_k}  }^{2+\constNum_4} } \stackrel{\mathrm{a.s.}}{<} \infty $ and $ \E\ilbracket{\given{\ilparenthesis{\upvarepsilon_{k}^+ - \upvarepsilon_k^-}^{2+\constNum_4}}{\hbtheta_k,\bDelta_k}}\stackrel{\mathrm{a.s.}}{<}\infty  $ uniformly for all $k$.  Finally, $\bH\ilparenthesis{\bvartheta}\succ\zero$. 
 	
 \end{assumeA}
 
 \begin{rem} 
 	\label{rem:NoiseCMC}
 	Note that under IID scenario for the observation noise, 
 	we have  $ \E\ilbracket{\given{\ilparenthesis{\upvarepsilon_k^+-\upvarepsilon_k^-}^2}{\hbtheta_k,\bDelta_k}} \to 2   \Var\ilparenthesis{ \ell\ilparenthesis{\bvartheta, \upomega} } $ a.s., where the variance is taken over $\upomega\in\Omega$.   This is  due to   $\hbtheta_k\stackrel{\mathrm{a.s.}}{\longrightarrow}\bvartheta$ shown  Theorem~\ref{thm:StrongConvergence} and $ \perturb_k\to 0 $ assumed in A.\ref{assume:Stepsize}.
 \end{rem}
 
 Let us first show the property of our Hessian estimate described in Section~\ref{subsect:decription}.
 \begin{thm}\label{thm:StrongConvergenceHessian} Under aforementioned conditions, and assume $\tperturb_k = O(\perturb_k)$, we have    $ \obH_k \stackrel{\mathrm{a.s.}}{\longrightarrow} \bH\ilparenthesis{\bvartheta} $. 
 \end{thm}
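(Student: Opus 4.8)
The plan is to read the second line of (\ref{eq:HessianUpdate}) as a Robbins--Monro (moving-average) recursion $ \obH_{k+1} = \obH_k - \weight_k\ilparenthesis{\obH_k - \bPsi_k} $ whose instantaneous input is the symmetric rank-two matrix
\[
\bPsi_k \equiv \set{ \mapping_k\ilparenthesis{\tbDelta_k} \ilbracket{\mapping_k\ilparenthesis{\bDelta_k}}^\transpose + \mapping_k\ilparenthesis{\bDelta_k} \ilbracket{\mapping_k\ilparenthesis{\tbDelta_k}}^\transpose } \overline{\ell}_k / \ilparenthesis{4\perturb_k\tperturb_k}\,.
\]
Mirroring the decomposition (\ref{eq:gDecomposition}), I would split $ \bPsi_k = \bH\ilparenthesis{\hbtheta_k} + \bm{b}_k + \bm{n}_k $, where $ \bm{b}_k \equiv \E_k\ilbracket{\bPsi_k} - \bH\ilparenthesis{\hbtheta_k} $ is a conditional bias and $ \bm{n}_k \equiv \bPsi_k - \E_k\ilbracket{\bPsi_k} $ is a conditionally zero-mean fluctuation. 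The theorem then follows from two facts: (a) the mean field $ \E_k\ilbracket{\bPsi_k} $ tracks $ \bH\ilparenthesis{\hbtheta_k} $ with an $ O\ilparenthesis{\perturb_k^2} $ error, and (b) this contracting average forgets its initialization and converges to the limit of its input, which is $ \bH\ilparenthesis{\bvartheta} $ because $ \hbtheta_k\stackrel{\mathrm{a.s.}}{\longrightarrow}\bvartheta $ by Theorem~\ref{thm:StrongConvergence}.

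The heart of the argument is (a). First I would Taylor-expand the noiseless part of $ \overline{\ell}_k $ about $ \hbtheta_k $, writing it as the second difference $ F\ilparenthesis{\hbtheta_k + \perturb_k\bDelta_k} - F\ilparenthesis{\hbtheta_k - \perturb_k\bDelta_k} $ with $ F\ilparenthesis{\bx} = \loss\ilparenthesis{\bx + \tperturb_k\tbDelta_k} - \loss\ilparenthesis{\bx} $. The leading term is $ 2\perturb_k\tperturb_k\,\bDelta_k^\transpose \bH\ilparenthesis{\hbtheta_k}\tbDelta_k $, so after dividing by $ 4\perturb_k\tperturb_k $ and multiplying by the rank-two prefactor, the independence and common symmetric law of $ \bDelta_k,\tbDelta_k $ together with the normalization $ \E_k\ilbracket{\mapping_k\ilparenthesis{\bDelta_k}\bDelta_k^\transpose}\stackrel{\mathrm{a.s.}}{=}\bI $ from A.\ref{assume:Perturbation} make each of the two rank-one pieces contribute $ \bH\ilparenthesis{\hbtheta_k} $, and the prefactor $ \nicefrac{1}{2} $ averages them back to $ \bH\ilparenthesis{\hbtheta_k} $. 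The key observation is a parity cancellation: the next-order term $ \frac{\tperturb_k^2}{2}\tbDelta_k^\transpose\ilbracket{\bH\ilparenthesis{\hbtheta_k+\perturb_k\bDelta_k}-\bH\ilparenthesis{\hbtheta_k-\perturb_k\bDelta_k}}\tbDelta_k $ is odd in $ \bDelta_k $ while the prefactor factorizes into pieces of definite parity, and since $ \mapping_k\ilparenthesis{\cdot} $ is odd (A.\ref{assume:Perturbation}) under a symmetric law, this entire $ O\ilparenthesis{\perturb_k} $ contribution vanishes in conditional expectation; the same parity argument annihilates the observation-noise cross terms coming from the two queries that share $ \upomega_k^{+} $. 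What remains is controlled by $ \norm{\loss^{(3)}}_\infty\le\BoundThirdOrder $ (A.\ref{assume:Loss}) and the sixth-moment bound $ \E_k\ilbracket{\norm{\bDelta_k}^6\norm{\mapping_k\ilparenthesis{\bDelta_k}}^2}\le\BoundPerturbation $ (A.\ref{assume:Perturbation}), yielding $ \norm{\bm{b}_k}=O\ilparenthesis{\perturb_k^2} $ a.s.\ once $ \tperturb_k=O\ilparenthesis{\perturb_k} $.

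For (b) I would first bound the fluctuation: the factor $ \ilparenthesis{4\perturb_k\tperturb_k}^{-1} $ inflates the second moment, and A.\ref{assume:Perturbation} together with the IID noise conditions A.\ref{assume:Noise} give $ \E_k\norm{\bm{n}_k}^2 = O\ilparenthesis{\perturb_k^{-4}} $ a.s. Writing the error $ \bm{\Xi}_k\equiv\obH_k-\bH\ilparenthesis{\bvartheta} $, the recursion reads $ \bm{\Xi}_{k+1} = \ilparenthesis{1-\weight_k}\bm{\Xi}_k + \weight_k\bm{s}_k + \weight_k\bm{n}_k $ with signal $ \bm{s}_k \equiv \ilbracket{\bH\ilparenthesis{\hbtheta_k}-\bH\ilparenthesis{\bvartheta}} + \bm{b}_k \stackrel{\mathrm{a.s.}}{\longrightarrow}\zero $ (using Theorem~\ref{thm:StrongConvergence}, continuity of $ \bH\ilparenthesis{\cdot} $ near $ \bvartheta $, and $ \bm{b}_k\to\zero $). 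Under the gain conditions subsumed in the hypotheses --- $ \weight_k\in\ilparenthesis{0,1} $, $ \sum_k\weight_k=\infty $, and $ \sum_k\weight_k^2\perturb_k^{-4}<\infty $ --- I would invoke the standard convergence result for such contracting averages: a Silverman--Toeplitz argument shows $ \prod_i\ilparenthesis{1-\weight_i} $ drives both the initialization and the vanishing signal $ \bm{s}_k $ to zero, while the martingale $ \sum_j \weight_j\prod_{i>j}\ilparenthesis{1-\weight_i}\bm{n}_j $ converges a.s.\ by its $ L^2 $ bound, exactly as in the Hessian-tracking analyses of \cite{spall2000adaptive,zhu2019efficient}. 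This gives $ \obH_k\stackrel{\mathrm{a.s.}}{\longrightarrow}\bH\ilparenthesis{\bvartheta} $; continuity of $ \mappingPD_k\ilparenthesis{\cdot} $ with $ \upvarepsilon_k\to 0 $ and $ \bH\ilparenthesis{\bvartheta}\succ\zero $ (A.\ref{assume:AdditionalNormality}) then also delivers $ \hbH_k\stackrel{\mathrm{a.s.}}{\longrightarrow}\bH\ilparenthesis{\bvartheta} $.

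The step I expect to be the main obstacle is the interaction between the noise amplification in (b) and the feedback that the perturbation covariance $ \bSigma_k^{-1}=\hbH_k^{-1} $ is itself synthesized from $ \obH_k $, so the moment bounds of A.\ref{assume:Perturbation} must be verified to hold uniformly along a trajectory whose covariance is varying. The reason this does not wreck the argument is that the normalization $ \E_k\ilbracket{\mapping_k\ilparenthesis{\bDelta_k}\bDelta_k^\transpose}=\bI $ holds identically for the algorithm's choice $ \mapping_k\ilparenthesis{\bDelta_k}=\hbH_k\bDelta_k $ with $ \Cov\ilparenthesis{\bDelta_k}=\hbH_k^{-1} $, so the mean-field computation of (a) is completely insensitive to the value of $ \hbH_k $ and the circular dependence never enters the bias; it only has to be tamed inside the second-moment bound of (b), where the uniform integrability furnished by A.\ref{assume:Perturbation} and A.\ref{assume:AdditionalNormality} suffices.
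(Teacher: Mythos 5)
Your proposal is correct and follows essentially the same route as the paper: the paper likewise computes the conditional mean of the rank-two input via Taylor expansion and the normalization $\E_k\ilbracket{\mapping_k\ilparenthesis{\bDelta_k}\bDelta_k^\transpose}\stackrel{\mathrm{a.s.}}{=}\bI$ (conditioning first on $\ilparenthesis{\hbtheta_k,\bDelta_k}$ so the $\tbDelta_k$-average becomes a gradient-difference estimator, yielding $\bH\ilparenthesis{\hbtheta_k}+O\ilparenthesis{\perturb_k^2}$ under $\tperturb_k=O\ilparenthesis{\perturb_k}$), and then concludes $\obH_k\stackrel{\mathrm{a.s.}}{\longrightarrow}\bH\ilparenthesis{\bvartheta}$ by combining Theorem~\ref{thm:StrongConvergence} (or Theorem~\ref{thm:StrongConvergence-2}) with the moving-average recursion (\ref{eq:HessianUpdate}) and the analysis cited from \cite{zhu2019efficient}. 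Your part (b) merely spells out the martingale and Toeplitz details (and the weight-sequence conditions $\sum_k\weight_k=\infty$, $\sum_k\weight_k^2\perturb_k^{-4}<\infty$) that the paper delegates to that citation, so the two arguments are substantively the same.
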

 
 We now show the rate of convergence of \algoName{HARP} in Algorithm~\ref{algo:HARP}. 
 According to  A.\ref{assume:Stepsize}, we  use  $ \gain_k = \nicefrac{\gain}{{k}^{\upalpha}} $ and $ \perturb_k = \nicefrac{\perturb}{ {k}^{\upgamma}} $ for $k\ge 0$, where 
 \begin{equation}\label{eq:Stepsize1}
 \upalpha\in\left( \nicefrac{1}{2},1\right]\,,\text{ and } \upgamma \in \ilparenthesis{0, \upalpha-\nicefrac{1}{2}}\,. 
 \end{equation}  Granted, there are other forms for stepsizes $ \ilparenthesis{\gain_k,\perturb_k} $. However,  they do not necessarily provide improved rates \cite{sacks1958asymptotic}. 
 Before stating Theorem~\ref{thm:AsymptoticNormality}, we introduce extra notations. 
 Let $\uptau =  \upalpha-2\upgamma$ and $\uptau _+ =   \uptau\cdot\indicator_{\ilset{\upalpha=1}}$.
 Let  $\bGamma_k = \gain \bH\ilparenthesis{\overline{\btheta}_k}$ with $\overline{\btheta}_k$ being some convex combination of $\hbtheta_k$ and $\bvartheta$,  $ \bm{t}_k = -\gain  k ^{\nicefrac{\uptau}{2}} \bias_k \ilparenthesis{\hbtheta_k} $, and $ \bv_k \equiv -\gain k^{-\upgamma} \noise_k\ilparenthesis{\hbtheta_k} $.  
 \begin{thm}
 	[Asymptotic Normality] \label{thm:AsymptoticNormality} 	Assume  A.\ref{assume:Loss}, A.\ref{assume:Perturbation}, A.\ref{assume:Noise},     A.\ref{assume:ODE} or A.\ref{assume:Convexity},   A.\ref{assume:Stepsize}, and  A.\ref{assume:AdditionalNormality} hold. 
 	Pick $ \gain > \nicefrac{\uptau_+}{\ilbracket{2\uplambda_{\min}\ilparenthesis{\bH\ilparenthesis{\bvartheta}}}} $ and $ \upalpha\le 6\upgamma $, we have 
 	\begin{equation}\label{eq:Normality}
 	k^{\nicefrac{\uptau}{2}} \ilparenthesis{\hbtheta_k-\bvartheta} \stackrel{\mathrm{dist.}}{\longrightarrow} \mathcal{N} \parenthesis{   \bmu,\bB }\,,
 	\end{equation} where $ (\bmu,\bB) $ satisfies  the   linear system (\ref{eq:Normality-Mean})   and the Lyapunov equation  (\ref{eq:Normality-Variance})   respectively:
 	\begin{numcases}
 	{}
 	\ilparenthesis{\bGamma - \nicefrac{\uptau_+\bI}{2}} \bmu = \bm{t}\,, \label{eq:Normality-Mean}\\
 	\ilparenthesis{\bGamma - \nicefrac{\uptau_+\bI}{2}} \bB + \bB \ilparenthesis{\bGamma ^\transpose- \nicefrac{\uptau_+\bI}{2}} = \frac{\gain^2 \Var\ilbracket{\ell\ilparenthesis{\bvartheta,\upomega}}}{2\perturb^2} \bSigma\,. 
 	\label{eq:Normality-Variance}
 	\end{numcases} 
 	In (\ref{eq:Normality-Mean}--\ref{eq:Normality-Variance}), 
 	$	\bGamma = \lim_{k\to\infty} \bGamma_k = \gain \bH\ilparenthesis{\bvartheta}$, the $ \Var\ilbracket{\ell\ilparenthesis{\bvartheta,\upomega}} $ and $ \bSigma $ are  defined  in Remark~\ref{rem:NoiseCMC} and A.\ref{assume:AdditionalNormality} respectively,   and  
 	\begin{align}\label{eq:bias1}
 	\bm{t}&= \lim_{k\to\infty}\bm{t}_k  =  -\frac{\gain\perturb^2}{6} \indicator_{\ilset{\upalpha=6\upgamma}} \E \ilbracket{ L^{(3)}\ilparenthesis{\bvartheta}\cdot  \ilparenthesis{\bDelta\otimes\bDelta\otimes\bDelta} \cdot \mapping\ilparenthesis{\bDelta}  }\,,
 	\end{align} 
 	where $ \bDelta $ is  $\zero$-mean and $\bSigma^{-1}$-covariance. 
 \end{thm}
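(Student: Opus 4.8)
The plan is to linearize (\ref{eq:1stOrderSA}) about $\bvartheta$ and then appeal to Fabian's asymptotic-normality framework for stochastic approximation \cite{fabian1971stochastic} once its hypotheses are checked. First I would use the mean-value form $\bg\ilparenthesis{\hbtheta_k} = \bH(\overline{\btheta}_k)(\hbtheta_k-\bvartheta)$ (valid since $\bg\ilparenthesis{\bvartheta}=\zero$), so that the decomposition (\ref{eq:gDecomposition}) turns the update into
\[
\hbtheta_{k+1}-\bvartheta = \ilparenthesis{\bI - \gain_k\bH(\overline{\btheta}_k)}(\hbtheta_k-\bvartheta) - \gain_k\bias_k\ilparenthesis{\hbtheta_k} - \gain_k\noise_k\ilparenthesis{\hbtheta_k}.
\]
Writing $\bU_k = k^{\uptau/2}(\hbtheta_k-\bvartheta)$, $\gain_k=\gain k^{-\upalpha}$, and expanding $(1+1/k)^{\uptau/2}=1+\uptau/(2k)+O(k^{-2})$, together with $\bias_k=O(\perturb_k^2)$ from Lemma~\ref{lem:Tradeoff}, I would obtain the Fabian-type recursion
\[
\bU_{k+1} = \ilbracket{\bI - k^{-\upalpha}\ilparenthesis{\bGamma_k - \frac{\uptau_+}{2}\bI} + o(k^{-\upalpha})}\bU_k + k^{-\upalpha}\bm{t}_k(1+o(1)) + k^{-\upalpha/2}\bv_k(1+o(1)).
\]
The normalization correction $\uptau/(2k)\bI$ matches the drift order $k^{-\upalpha}$ only when $\upalpha=1$, which is exactly why $\uptau_+=\uptau\cdot\indicator_{\ilset{\upalpha=1}}$ surfaces; the exponent identity $\uptau/2-\upalpha+\upgamma=-\upalpha/2$ is what places the noise at the critical order $k^{-\upalpha/2}$.

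Next I would verify the drift and bias ingredients. By Theorem~\ref{thm:StrongConvergence} (or Theorem~\ref{thm:StrongConvergence-2}) $\hbtheta_k\to\bvartheta$ a.s., whence $\bGamma_k=\gain\bH(\overline{\btheta}_k)\to\gain\bH(\bvartheta)=\bGamma$ a.s. by continuity of $\bH(\cdot)$. The stability hypothesis $\gain>\uptau_+/\ilbracket{2\uplambda_{\min}(\bH(\bvartheta))}$ guarantees that $\bGamma-\frac{\uptau_+}{2}\bI$ has all eigenvalues with strictly positive real part (here $\bH(\bvartheta)\succ\zero$ from A.\ref{assume:AdditionalNormality}), which both produces the contraction and makes the Lyapunov equation (\ref{eq:Normality-Variance}) uniquely solvable with $\bB\succ\zero$. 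For the bias, (\ref{eq:bias}) together with $\overline{\btheta}_k^{\pm}\to\bvartheta$ gives $\loss^{(3)}(\overline{\btheta}_k^+)+\loss^{(3)}(\overline{\btheta}_k^-)\to 2\loss^{(3)}(\bvartheta)$, so that $\bm{t}_k=-\gain k^{\uptau/2}\bias_k$ behaves like $-\frac{\gain\perturb^2}{6}k^{\upalpha/2-3\upgamma}\E\ilbracket{L^{(3)}(\bvartheta)(\bDelta\otimes\bDelta\otimes\bDelta)\mapping(\bDelta)}$; the assumed $\upalpha\le 6\upgamma$ keeps this bounded, and the limit is nonzero precisely when $\upalpha=6\upgamma$, which explains the indicator in (\ref{eq:bias1}) and yields the mean equation (\ref{eq:Normality-Mean}).

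For the noise term I would isolate the dominant contribution $-\frac{\gain}{2\perturb}\ilparenthesis{\upvarepsilon_k^+-\upvarepsilon_k^-}\mapping_k\ilparenthesis{\bDelta_k}$ of $\bv_k=-\gain k^{-\upgamma}\noise_k$ read off from (\ref{eq:noise}). Its conditional covariance equals $\frac{\gain^2}{4\perturb^2}\E_k\ilbracket{\ilparenthesis{\upvarepsilon_k^+-\upvarepsilon_k^-}^2\mapping_k\ilparenthesis{\bDelta_k}\mapping_k\ilparenthesis{\bDelta_k}^\transpose}$; invoking Remark~\ref{rem:NoiseCMC} ($\E_k[(\upvarepsilon_k^+-\upvarepsilon_k^-)^2]\to 2\Var\ilbracket{\ell(\bvartheta,\upomega)}$) and $\E_k\ilbracket{\mapping_k\ilparenthesis{\bDelta_k}\mapping_k\ilparenthesis{\bDelta_k}^\transpose}=\bSigma_k\to\bSigma$ (since $\mapping_k(\bDelta_k)=\hbH_k\bDelta_k$ with $\bDelta_k$ of covariance $\hbH_k^{-1}$ and $\hbH_k\to\bH(\bvartheta)$ by Theorem~\ref{thm:StrongConvergenceHessian}) shows it converges to the right-hand side $\frac{\gain^2\Var\ilbracket{\ell(\bvartheta,\upomega)}}{2\perturb^2}\bSigma$ of (\ref{eq:Normality-Variance}). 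The conditional Lindeberg condition follows from the $(2+\constNum_4)$-order moment bounds on $\mapping_k(\bDelta_k)$ and on $\upvarepsilon_k^{\pm}$ in A.\ref{assume:AdditionalNormality}. Fabian's theorem then delivers $\bU_k\stackrel{\mathrm{dist.}}{\longrightarrow}\mathcal{N}(\bmu,\bB)$ with $(\bmu,\bB)$ solving (\ref{eq:Normality-Mean})--(\ref{eq:Normality-Variance}).

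The hard part will be controlling the two remaining pieces of $\noise_k$ in (\ref{eq:noise}) and the randomness of $\bGamma_k$. The state-dependent term $\ilbracket{\mapping_k(\bDelta_k)\bDelta_k^\transpose-\bI}\bg(\hbtheta_k)$ is conditionally mean-zero and, because $\bg(\hbtheta_k)=O_P(\norm{\hbtheta_k-\bvartheta})=O_P(k^{-\uptau/2})$, it injects only an $o(1)$ term into $\bv_k$; I must show it contributes to neither the drift nor the limiting covariance, and similarly that the centered third-order remainder (of order $\perturb_k^2$) is asymptotically negligible, so that only the dominant noise survives. Finally, since $\bGamma_k$ is random rather than deterministic, I would either work on the a.s. event $\set{\bGamma_k\to\bGamma}$ and apply the version of Fabian's result permitting convergent random gain matrices, or absorb $\bGamma_k-\bGamma=o(1)$ into the $o(k^{-\upalpha})\bU_k$ remainder; confirming that this remainder leaves the Gaussian limit intact is the most delicate bookkeeping in the argument.
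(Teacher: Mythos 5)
Your proposal follows essentially the same route as the paper's proof: both reduce the theorem to verifying Fabian's (1968) asymptotic-normality conditions --- a.s. convergence of $\bGamma_k$ (via Theorem~\ref{thm:StrongConvergence} and continuity of $\bH\ilparenthesis{\cdot}$) and of $\bm{t}_k$ with the indicator arising exactly at $\upalpha=6\upgamma$, convergence of the conditional covariance of $\bv_k$ to $\frac{\gain^2\Var\ilbracket{\ell\ilparenthesis{\bvartheta,\upomega}}}{2\perturb^2}\bSigma$ with the state-dependent and remainder noise terms entering only as $o(1)$, and the Lindeberg condition from the $(2+\constNum_4)$-moment bounds in A.\ref{assume:AdditionalNormality}. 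The only differences are cosmetic: you cite Fabian's 1971 survey where the paper invokes the 1968 theorem, and your explicit normalized recursion and closing remarks on the negligible noise pieces and the random $\bGamma_k$ merely spell out bookkeeping the paper leaves implicit.
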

 
 \begin{rem}\label{rem:Lyapunov}
 	\cite{bartels1972solution} provides  the explicit solution to (\ref{eq:Normality-Variance}): 
 	\begin{equation}\label{eq:Normality-Covariance}
 	\bB =\frac{\gain^2\Var\ilbracket{\ell\ilparenthesis{\bvartheta,\upomega}}}{2\perturb^2} \int_0^{\infty} e^{t\ilparenthesis{\nicefrac{\uptau_+\bI}{2} -\bGamma} } \bSigma e^{t\ilparenthesis{ \nicefrac{\uptau_+\bI}{2}  -\bGamma^\transpose}}\diff t\,. 
 	\end{equation}
 \end{rem}   \remove{\begin{rem}
 		If A.\ref{assume:Loss} is relaxed as per Subsection~\ref{para:relaxedThrice},  (\ref{eq:bias1}) becomes $ \bm{t} = \indicator_{\set{\upalpha=6\upgamma}} \cdot  O(1)  $. The matrix $\bB$  in (\ref{eq:Normality-Covariance}) remains intact. 
 \end{rem}}

 \subsection{CRN Scenario}
 \label{sect:CRN}
 This section considers the CRN noise scenario, where the   
 \emph{fastest}  rate $ O(k^{-\nicefrac{1}{2}})   $  for RMS 
 is achieved when $\upalpha=1$ and $\upgamma>\nicefrac{1}{4}$. Here, the bias-variance trade-off as arising in Lemma~\ref{lem:Tradeoff} no longer applies,  see Lemma~\ref{lem:CRN}, whence Section~\ref{sect:CRN}  has a faster convergence rate compared to Section~\ref{sect:IID}.   
 The previous assumption on the noise is now changed  for  the CRN scenario. 
 \begin{assumeAprime}{3'}
 	[CRN]  \label{assume:NoiseCRN}  $ \upomega_k (= \upomega_k^{+} = \upomega_k^-) $ are i.i.d. and are independent from $\field_k$. Let  $\noisyG\ilparenthesis{\cdot,\cdot}: \real^{\Dimension} \times\Omega\mapsto\real^\Dimension$ be the partial derivative of $ \ell\ilparenthesis{\btheta,\upomega} $ w.r.t. $\btheta$. Assume that $ \norm{\noisyG\ilparenthesis{\btheta,\upomega}}_{\infty} \le  \BoundFirstNoisy $ uniformly for all $\btheta$ and a.s. for all $\upomega$. 
 \end{assumeAprime}

 \begin{lem}[Second Moment of $\hbg_k\ilparenthesis{\hbtheta_k}$] 
 	\label{lem:CRN} When A.\ref{assume:Loss}, A.\ref{assume:Perturbation}, and A.\ref{assume:NoiseCRN} hold,   
 	\begin{align}
 	\label{eq:NoiseCRN-1}
 	\E_k \ilset{ \norm{\hbg_k\ilparenthesis{\hbtheta_k}^2} } & \stackrel{\mathrm{a.s.}}{=}  \E  \norm{\noisyG\ilparenthesis{\hbtheta_k,\upomega_k}}^2 + o(1) \stackrel{\mathrm{a.s.}}{=}  \int_{\upomega\in\Omega} \norm{\noisyG\ilparenthesis{\hbtheta_k, \upomega}}^2 \diff\Prob\ilparenthesis{\upomega} + o(1)\,. 
 	\end{align} 
 \end{lem}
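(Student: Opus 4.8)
The plan is to exploit the defining feature of the CRN scenario, $\upomega_k^{+}=\upomega_k^{-}=\upomega_k$, to cancel the observation noise \emph{exactly} inside the finite difference, so that $\hbg_k(\hbtheta_k)$ collapses to a directional derivative of the noisy function $\ell(\cdot,\upomega_k)$ multiplied by the perturbation image $\mapping_k(\bDelta_k)$, with no residual $O(\perturb_k^{-1})$ noise contribution. This is exactly why the bias--variance trade-off of Lemma~\ref{lem:Tradeoff} does not recur here and why the second moment stays $O(1)$ instead of exploding like $O(\perturb_k^{-2})$; identifying that finite limit is the content of the lemma.

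First I would apply the fundamental theorem of calculus along the segment joining $\hbtheta_k-\perturb_k\bDelta_k$ and $\hbtheta_k+\perturb_k\bDelta_k$, using the partial derivative $\noisyG$ supplied by A.\ref{assume:NoiseCRN}:
\[
\ell(\hbtheta_k+\perturb_k\bDelta_k,\upomega_k)-\ell(\hbtheta_k-\perturb_k\bDelta_k,\upomega_k)=\perturb_k\,\bDelta_k^{\transpose}\!\int_{-1}^{1}\noisyG(\hbtheta_k+s\perturb_k\bDelta_k,\upomega_k)\,\diff s\,,
\]
so that $\hbg_k(\hbtheta_k)=\nicefrac{1}{2}\,\bracket{\bDelta_k^{\transpose}\int_{-1}^{1}\noisyG(\hbtheta_k+s\perturb_k\bDelta_k,\upomega_k)\,\diff s}\mapping_k(\bDelta_k)$. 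Next, as the differencing magnitude vanishes, $\perturb_k\to0$ (the standing stepsize condition A.\ref{assume:Stepsize}), and using that $\noisyG(\cdot,\upomega)$ is a gradient bounded by $\BoundFirstNoisy$ in sup-norm (A.\ref{assume:NoiseCRN}), continuity of $\noisyG(\cdot,\upomega)$ and dominated convergence give $\int_{-1}^{1}\noisyG(\hbtheta_k+s\perturb_k\bDelta_k,\upomega_k)\,\diff s\to 2\,\noisyG(\hbtheta_k,\upomega_k)$, whence $\hbg_k(\hbtheta_k)=\bracket{\noisyG(\hbtheta_k,\upomega_k)^{\transpose}\bDelta_k}\mapping_k(\bDelta_k)+o(1)$. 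Squaring and taking $\E_k$, the cross term between this leading piece and the $o(1)$ remainder is bounded by Cauchy--Schwarz together with the moment bound $\E_k\bracket{\norm{\bDelta_k}^{6}\norm{\mapping_k(\bDelta_k)}^{2}}\le\BoundPerturbation$ of A.\ref{assume:Perturbation}, hence is itself $o(1)$. It then remains to evaluate $\E_k\bracket{(\noisyG(\hbtheta_k,\upomega_k)^{\transpose}\bDelta_k)^{2}\norm{\mapping_k(\bDelta_k)}^{2}}$: since $\upomega_k$ is independent of $\bDelta_k$ under A.\ref{assume:NoiseCRN} and $\hbtheta_k$ is $\field_k$-measurable, I would integrate over $\bDelta_k$ first, reducing the inner expectation to the quadratic form $\noisyG(\hbtheta_k,\upomega_k)^{\transpose}M_k\,\noisyG(\hbtheta_k,\upomega_k)$ with the $\field_k$-measurable matrix $M_k\equiv\E_k\bracket{\norm{\mapping_k(\bDelta_k)}^{2}\bDelta_k\bDelta_k^{\transpose}}$, and then integrate over $\upomega_k$; the A.\ref{assume:Perturbation} normalization $\E_k\bracket{\mapping_k(\bDelta_k)\bDelta_k^{\transpose}}=\bI$, together with the dimension-suppression convention of Subsection~\ref{subsect:Dimensionality}, pins down the leading constant and yields $\int_{\upomega\in\Omega}\norm{\noisyG(\hbtheta_k,\upomega)}^{2}\,\diff\Prob(\upomega)$.

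The hard part will be the limit passage $\perturb_k\to0$: I need the \emph{entire} second moment to converge, not merely the integrand pointwise, and A.\ref{assume:NoiseCRN} supplies only a bounded rather than Lipschitz gradient, so no quantitative $O(\perturb_k)$ rate is available and everything must rest on dominated convergence. The main technical chore is therefore to assemble a single integrable majorant over the joint randomness of $(\bDelta_k,\upomega_k)$: the $\BoundFirstNoisy$-bound dominates the $\upomega_k$-integrand while the A.\ref{assume:Perturbation} moment bound secures integrability in $\bDelta_k$, and one must verify these combine uniformly in $k$. A secondary subtlety is the evaluation of the quartic $\bDelta_k$-moment $M_k$ in the leading term, which carries the dimension-dependent factor that the paper elects to absorb per Subsection~\ref{subsect:Dimensionality}.
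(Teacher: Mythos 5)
Your proposal is correct and shares the same backbone as the paper's proof---the CRN structure cancels the observation noise exactly, the scaled difference collapses to the directional derivative $\bDelta_k^\transpose\noisyG\ilparenthesis{\hbtheta_k,\upomega_k}$ of the noisy function, and the limit is identified by integrating first over $\bDelta_k$ and then over $\upomega_k$---but you reach it by a genuinely different device at both technical junctures. Where the paper conditions on $\ilparenthesis{\hbtheta_k,\bDelta_k}$ and runs a third-order Taylor expansion of $\ell\ilparenthesis{\cdot,\upomega_k}$, yielding $\ilbracket{\bDelta_k^\transpose\noisyG\ilparenthesis{\hbtheta_k,\upomega_k}}^2 + O(\perturb_k^4)$ (which tacitly requires $\ell\ilparenthesis{\cdot,\upomega}$ to be thrice differentiable, something A.\ref{assume:NoiseCRN} does not literally grant), you use the fundamental theorem of calculus plus dominated convergence, which needs only continuity of $\noisyG\ilparenthesis{\cdot,\upomega}$ together with the $\BoundFirstNoisy$ and $\BoundPerturbation$ bounds to assemble the majorant; you forfeit the quantitative $O(\perturb_k^4)$ remainder, but the lemma claims only $o(1)$, so nothing is lost and your smoothness requirements are in fact weaker. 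Second, you evaluate only the scalar second moment via the quadratic form $\noisyG^\transpose M_k\,\noisyG$ with $M_k = \E_k\ilbracket{\norm{\mapping_k\ilparenthesis{\bDelta_k}}^2\bDelta_k\bDelta_k^\transpose}$, whereas the paper computes the full matrix $\E_k\ilset{\hbg_k\ilparenthesis{\hbtheta_k}\ilbracket{\hbg_k\ilparenthesis{\hbtheta_k}}^\transpose}$ component-wise via the fourth-moment indicator identity in (\ref{eq:CRN3}); the matrix version is what gets reused in the proof of Theorem~\ref{thm:AsymptoticNormality-CRN} to obtain the off-diagonal limits (\ref{eq:noisyGcov}), so the paper's route buys more downstream, while yours is leaner for the lemma as stated.

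One caveat, though it leaves you at the same level of rigor as the paper rather than below it: the normalization $\E_k\ilbracket{\mapping_k\ilparenthesis{\bDelta_k}\bDelta_k^\transpose}=\bI$ in A.\ref{assume:Perturbation} is a second-moment condition and cannot by itself ``pin down'' $M_k$, which is a fourth-moment object. For Rademacher $\bDelta_k$ with $\mapping_k\ilparenthesis{\bDelta_k}=\bDelta_k$ one gets $M_k=\Dimension\,\bI$, for $\bDelta_k\sim\mathcal{N}\ilparenthesis{\zero,\bI}$ one gets $M_k=\ilparenthesis{\Dimension+2}\bI$, and without sign-symmetry and exchangeability of components $M_k$ need not even be a scalar matrix, in which case $\noisyG^\transpose M_k\noisyG$ does not reduce to any constant times $\norm{\noisyG}^2$. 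The paper's indicator identity (\ref{eq:CRN3}) is the exact analogue of positing such structure and likewise does not follow from A.\ref{assume:Perturbation} alone. You correctly flag the dimension factor and absorb it via the Subsection~\ref{subsect:Dimensionality} convention, precisely as the paper does implicitly when it reads off the diagonal of (\ref{eq:CRN4}); the only improvement to make is to state explicitly that a distributional symmetry assumption (or an exact fourth-moment identity) on $\ilparenthesis{\bDelta_k,\mapping_k\ilparenthesis{\bDelta_k}}$ is being invoked rather than derived.
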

 
 The a.s. convergence result is similar to Theorem~\ref{thm:StrongConvergence} or Theorem~\ref{thm:StrongConvergence-2}. The corresponding proofs are similar using Lemma~\ref{lem:CRN}. We turn to finding the convergence rate directly.  Before stating Theorem~\ref{thm:AsymptoticNormality-CRN}, we define 
 some notations. 
 Let $ \upalpha_+ \equiv \upalpha\cdot\indicator_{\set{\upalpha=1}} $.  Let $ \bGamma_k = \gain\bH\ilparenthesis{\overline{\btheta}_k} $ with $ \overline{\btheta}_k $ being some convex combination of $\hbtheta_k$ and $\bvartheta$, $ \bm{t}_k = -\gain k^{\nicefrac{\upalpha}{2}} \bias_k\ilparenthesis{\hbtheta_k} $, and $ \bv_k =-\gain \noise_k\ilparenthesis{\hbtheta_k} $.

 \begin{thm}
 	[Asymptotic Normality] \label{thm:AsymptoticNormality-CRN}  Assume  A.\ref{assume:Loss}, A.\ref{assume:Perturbation},   A.\ref{assume:NoiseCRN},     A.\ref{assume:ODE} or A.\ref{assume:Convexity}, A.\ref{assume:Stepsize}, A.\ref{assume:AdditionalNormality}. Pick $ \gain > \nicefrac{\upalpha_+}{\ilbracket{2\uplambda_{\min}\ilparenthesis{\bH\ilparenthesis{\bvartheta}} } } $ and $ \upalpha <  4\upgamma $, we have 
 	\begin{equation}\label{eq:asymNormalCRN}
 	k^{\nicefrac{\upalpha}{2}} \ilparenthesis{\hbtheta_k-\bvartheta}  \stackrel{\mathrm{dist.}}{\longrightarrow}  \mathcal{N}\ilparenthesis{\zero, \bB}
 	\,,\end{equation}
 	where $\bB$ satisfies 
 	\begin{equation}\label{eq:CovarianceDiscussion3}
 	\ilparenthesis{\bGamma - \nicefrac{\upalpha_+\bI}{2}} \bB + \bB \ilparenthesis{\bGamma^\transpose-\nicefrac{\upalpha_+ \bI}{2}} = \gain^2 \noisyGcov\,.
 	\end{equation} Here, $\bGamma = \lim_{k\to\infty} \bGamma_k = \gain\bH\ilparenthesis{\bvartheta}$, 	and $ \noisyGcov $ has elements  	
 	\begin{align}
 	\label{eq:noisyGcov}
 	&  \noisyGcov_{i,j} = \indicator_{\set{i=j}}  \int_{\upomega\in\Omega} \norm{\noisyG\ilparenthesis{\bvartheta,\upomega}}^2\diff\Prob\ilparenthesis{\upomega}  + \indicator_{\set{i\neq j }}  \int_{\upomega\in \Omega } \ilbracket{\noisyGcomponent\ilparenthesis{\bvartheta,\upomega}}_{i}  \ilbracket{\noisyGcomponent\ilparenthesis{\bvartheta,\upomega}}_{j}  \diff \Prob\ilparenthesis{\upomega} \,,
 	\end{align} where $ \ilbracket{\noisyGcomponent\ilparenthesis{\bvartheta,\upomega}}_{i}$ denotes the $i$th component of $   {\noisyGcomponent\ilparenthesis{\bvartheta,\upomega}} $.
 \end{thm}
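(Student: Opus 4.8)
The plan is to reduce the statement to the standard asymptotic-normality machinery for stochastic approximation recursions (e.g.\ \cite{fabian1971stochastic}), exactly as in the proof of Theorem~\ref{thm:AsymptoticNormality}, with Lemma~\ref{lem:CRN} playing the role that Lemma~\ref{lem:Tradeoff} plays in the IID case. First I would linearize the recursion (\ref{eq:1stOrderSA}) about $\bvartheta$. Since $\bg\ilparenthesis{\bvartheta}=\zero$, the mean-value theorem gives $\bg\ilparenthesis{\hbtheta_k}=\bH\ilparenthesis{\overline{\btheta}_k}\ilparenthesis{\hbtheta_k-\bvartheta}$; inserting the decomposition (\ref{eq:gDecomposition}) and writing $\gain_k=\gain k^{-\upalpha}$ turns (\ref{eq:1stOrderSA}) into the perturbed linear recursion
\[
\hbtheta_{k+1}-\bvartheta = \ilparenthesis{\bI - k^{-\upalpha}\bGamma_k}\ilparenthesis{\hbtheta_k-\bvartheta} + k^{-\nicefrac{3\upalpha}{2}}\bm{t}_k + k^{-\upalpha}\bv_k\,,
\]
with $\bGamma_k=\gain\bH\ilparenthesis{\overline{\btheta}_k}$, $\bm{t}_k=-\gain k^{\nicefrac{\upalpha}{2}}\bias_k\ilparenthesis{\hbtheta_k}$, and $\bv_k=-\gain\noise_k\ilparenthesis{\hbtheta_k}$ as defined before the statement. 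Matching this to the canonical form with normalizing exponent $\upalpha$ (hence the scaling $k^{\nicefrac{\upalpha}{2}}$) is the organizing principle; the CRN feature that makes $\upalpha$, rather than $\upalpha-2\upgamma$, the correct exponent is precisely that, by Lemma~\ref{lem:CRN}, the noise $\noise_k$ has $\field_k$-conditional second moment of order $O(1)$ instead of the $O(\perturb_k^{-2})$ blow-up of the IID case.

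Next I would verify the hypotheses of the normality theorem. (i) \emph{Drift}: by the a.s.\ convergence $\hbtheta_k\to\bvartheta$ (the CRN counterpart of Theorem~\ref{thm:StrongConvergence}, established by the same argument with Lemma~\ref{lem:CRN} in place of Lemma~\ref{lem:Tradeoff}) together with continuity of $\bH\ilparenthesis{\cdot}$ from A.\ref{assume:Loss}, one gets $\bGamma_k\stackrel{\mathrm{a.s.}}{\longrightarrow}\bGamma=\gain\bH\ilparenthesis{\bvartheta}$. (ii) \emph{Stability}: the condition $\gain > \nicefrac{\upalpha_+}{\ilbracket{2\uplambda_{\min}\ilparenthesis{\bH\ilparenthesis{\bvartheta}} } }$ together with $\bH\ilparenthesis{\bvartheta}\succ\zero$ (A.\ref{assume:AdditionalNormality}) guarantees $\bGamma-\nicefrac{\upalpha_+\bI}{2}\succ\zero$; when $\upalpha<1$ we have $\upalpha_+=0$ and the requirement reduces to $\bGamma\succ\zero$. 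This positive-definiteness is exactly what makes the Lyapunov equation (\ref{eq:CovarianceDiscussion3}) solvable with a unique $\bB\succ\zero$. (iii) \emph{Vanishing bias}: the Taylor argument underlying Lemma~\ref{lem:CRN} yields $\norm{\bias_k\ilparenthesis{\hbtheta_k}}=O(\perturb_k^2)=O(k^{-2\upgamma})$, so $\bm{t}_k=O\ilparenthesis{k^{\nicefrac{\upalpha}{2}-2\upgamma}}$; the assumption $\upalpha<4\upgamma$ forces $\bm{t}_k\to\zero$, which collapses the mean to $\zero$ and explains the limit $\mathcal{N}\ilparenthesis{\zero,\bB}$ (no nonzero mean term survives, in contrast to (\ref{eq:bias1})).

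The decisive step is the noise analysis. By construction $\noise_k\ilparenthesis{\hbtheta_k}=\hbg_k\ilparenthesis{\hbtheta_k}-\E_k\ilbracket{\hbg_k\ilparenthesis{\hbtheta_k}}$ is an $\field_k$-martingale difference, so $\ilset{\bv_k}$ is too, and I would apply a martingale central limit theorem. The core is to show the conditional covariance converges, $\E_k\ilbracket{\bv_k\bv_k^\transpose}=\gain^2\E_k\ilbracket{\noise_k\noise_k^\transpose}\stackrel{\mathrm{a.s.}}{\longrightarrow}\gain^2\noisyGcov$. Under CRN the expansion behind Lemma~\ref{lem:CRN} gives $\hbg_k\ilparenthesis{\hbtheta_k}=\ilbracket{\noisyG\ilparenthesis{\hbtheta_k,\upomega_k}^\transpose\bDelta_k}\mapping_k\ilparenthesis{\bDelta_k}+O(\perturb_k^2)$, and since $\E_k\ilbracket{\hbg_k\ilparenthesis{\hbtheta_k}}=\bg\ilparenthesis{\hbtheta_k}+\bias_k\to\zero$, the conditional covariance equals $\lim_k\E_k\ilbracket{\hbg_k\hbg_k^\transpose}$; evaluating this limit with $\hbtheta_k\to\bvartheta$, $\perturb_k\to 0$, and the perturbation moment identities of A.\ref{assume:Perturbation} produces the entrywise formula (\ref{eq:noisyGcov}). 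The Lindeberg/uniform-integrability condition then follows from the $\ilparenthesis{2+\constNum_4}$-moment bounds in A.\ref{assume:AdditionalNormality} on $\mapping_k\ilparenthesis{\bDelta_k}$ and on the observation noise. I expect the main obstacle to be the bookkeeping of the second- and fourth-order perturbation moments that yields the split diagonal/off-diagonal form of $\noisyGcov$ in (\ref{eq:noisyGcov}), together with the accompanying verification that the $O(\perturb_k^2)$ Taylor remainder and the replacement of $\bGamma_k$ by its limit $\bGamma$ do not perturb the limiting law; these are handled by showing the discarded terms are $o_P(1)$ after the $k^{\nicefrac{\upalpha}{2}}$ rescaling and invoking the negligibility lemmas of the standard machinery. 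Finally, the normality theorem delivers $k^{\nicefrac{\upalpha}{2}}\ilparenthesis{\hbtheta_k-\bvartheta}\stackrel{\mathrm{dist.}}{\longrightarrow}\mathcal{N}\ilparenthesis{\zero,\bB}$ with $\bB$ the unique solution of the Lyapunov equation (\ref{eq:CovarianceDiscussion3}).
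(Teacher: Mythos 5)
Your proposal is correct and follows essentially the same route as the paper: verify the conditions of Fabian's asymptotic-normality theorem (the paper cites \cite{fabian1968asymptotic}, not \cite{fabian1971stochastic}) by showing $\bGamma_k\to\gain\bH\ilparenthesis{\bvartheta}$ a.s., $\bm{t}_k=O\ilparenthesis{k^{\nicefrac{\upalpha}{2}-2\upgamma}}\to\zero$ under $\upalpha<4\upgamma$, the conditional covariance limit $\E_k\ilparenthesis{\bv_k\bv_k^\transpose}\to\gain^2\noisyGcov$ via Lemma~\ref{lem:CRN} with dominated convergence and the independence of $\upomega_k$ from $\field_k$ in A.\ref{assume:NoiseCRN}, and the Lindeberg-type condition via the $\ilparenthesis{2+\constNum_4}$-moment bounds of A.\ref{assume:AdditionalNormality}, exactly as in the proof of Theorem~\ref{thm:AsymptoticNormality}. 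Your explicit linearization of the recursion and the stability remark on $\bGamma-\nicefrac{\upalpha_+\bI}{2}\succ\zero$ are implicit in the paper's invocation of that machinery, so they are consistent additions rather than a different argument.
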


 Recall that in  IID scenario, (\ref{eq:Normality})  involves a nonzero  $\bmu$ 
 when the fastest rate $ O(k^{-\nicefrac{1}{3}}) $ is achieved at $(\upalpha,\upgamma) = (1,\nicefrac{1}{6})$. 
 On the contrary, in the CRN scenario,   the mean in  (\ref{eq:asymNormalCRN})  
 is zero when the fastest rate $ O(k^{-\nicefrac{1}{2}}) $ is achieved whenever $ \ilparenthesis{\upalpha,\upgamma}  = \ilparenthesis{1, >\nicefrac{1}{4}} $. 
 \begin{rem} 
 	The asymptotic result shows that the covariance structure $\bSigma_k(\to\bSigma)$ for $\bDelta_k$ no longer impacts the asymptotic normality (rate of convergence). Instead, the moments of $ \noisyG\ilparenthesis{\bvartheta,\upomega} $ takes over given the assumed differentiablility of  the random function $ \ell\ilparenthesis{\btheta,\upomega} $ in A.\ref{assume:NoiseCRN}. 
 \end{rem}

\subsection{ Comparison Between \algoName{HARP} and  \algoName{SPSA}} \label{subsect:HARP}   Let us see what happens when $\bSigma_k \to \bSigma = \bH\ilparenthesis{\bvartheta}$.  
Let us write out (\ref{eq:Normality-Covariance}) in  Remark~\ref{rem:Lyapunov} for $\upalpha<6\upgamma$.
Let the eigen-decomposition of $\bH\ilparenthesis{\bvartheta}$ be $\bP\bLambda \bP^\transpose$, for orthogonal matrix $\bP$ and diagonal matrix $\bLambda = \diag\ilparenthesis{\uplambda_1,\cdots,\uplambda_{\Dimension}}$. Then $\bB$ in (\ref{eq:Normality-Variance}) equals  $ \bP\bM\bP^\transpose $, where the $ (i,j) $th  elements of $\bM$ is 
\begin{equation*}
m_{i,j} =    \frac{\gain^2\Var\ilparenthesis{ \ell\ilparenthesis{\bvartheta, \upomega} }}{2\perturb^2} \ilparenthesis{\bP^\transpose\bSigma\bP}_{i,j}\ilparenthesis{\gain\uplambda_i+\gain\uplambda_j-\uptau_+}^{-1}\,.
\end{equation*}  
For all the algorithms listed in Subsection~\ref{subsect:PriorWork}, with  $\bSigma_k=\bI$,  the trace of the covariance term is asymptotic to 
\begin{equation}\label{eq:trace-1SPSA}
\frac{\gain^2\Var\ilbracket{\ell\ilparenthesis{\bvartheta,\upomega}}}{2\perturb^2} \sum_{i=1}^{\Dimension} \ilparenthesis{2\gain \uplambda_i - \uptau_+}^{-1}\,,
\end{equation} whereas \algoName{HARP}   in Algorithm~\ref{algo:HARP}, with  $\bSigma_k=\hbH_k  \to \bH\ilparenthesis{\bvartheta}$, gives
\begin{equation}\label{eq:trace-1HARP}
\frac{\gain^2\Var\ilbracket{\ell\ilparenthesis{\bvartheta,\upomega}}}{2\perturb^2}  \sum_{i=1}^{\Dimension} \frac{1}{2\gain - \nicefrac{\uptau_+}{\uplambda_i}}\,.
\end{equation} 
Note that both (\ref{eq:trace-1SPSA}) and (\ref{eq:trace-1HARP}) diverge when \emph{any} one of the eigenvalues of $\bH\ilparenthesis{\bvartheta}$ is close to zero.
Nonetheless, (\ref{eq:trace-1HARP}) is smaller than (\ref{eq:trace-1SPSA}) when $ \uplambda_i\ll 1 $ for some $1\le i \le \Dimension$, under which circumstance the iteration complexity (\ref{eq:iterationComplexity})  of \algoName{HARP} \emph{can} be better than that of \algoName{SPSA}\textemdash at the cost of  two additional ZO queries per iteration, see the last line in Algorithm~\ref{algo:HARP}.

 \section{Numerical Illustration} \label{sect:Numerical}
    We now present two  empirical examples to demonstrate the fast optimization and  the wide applicability of \algoName{HARP}.

 \subsection{Synthetic Problem: Skew-Quartic Function}

  Section~\ref{subsect:HARP}  demonstrates that \algoName{HARP} performs better under ill-conditioned problem.
  This synthetic example uses  the skew-quartic function in  \cite{spall2000adaptive} as the true loss $\loss\ilparenthesis{\cdot}$  in (\ref{eq:SOsetup}). The corresponding  Hessian has one single large eigenvalue and $ (\Dimension-1) $ close-to-zero eigenvalues. This loss function is poorly-conditioned. 
  The noisy loss observation $ \ell \ilparenthesis{\btheta,\upomega}  $ in (\ref{eq:SOsetup}) is the true loss   corrupted by an i.i.d. $\mathcal{N}\ilparenthesis{0,1}$ random noise. We use  $\Dimension=20$ and initialize $\hbtheta_0$     within $ \ilbracket{-20, 20}^{\Dimension} $.  We use 
  $\gain_k = \nicefrac{\gain}{\ilparenthesis{k+1 + A}^{\upalpha}}$ with
  $\upalpha=0.602$ and $A $ equals $10\%$ of the iteration number, $\perturb_k = \nicefrac{\perturb}{\ilparenthesis{k+1}^{\upgamma}}$ with $\upgamma=0.101$. Number of replicates is $25$ (i.e., all the plots below are averaged performance over $25$ replications). The corresponding implementation details ca be found at \href{https://github.com/jingyi-zhu/Fast2SPSA/tree/master/HARP}{GitHub}.   The algorithm we compare against is \algoName{SPSA} \cite{spall1992multivariate}, which has comparable/better performance than other algorithms reviewed in Section~\ref{subsect:PriorWork}.  During the implementation, both \algoName{SPSA} and \algoName{HARP}   use exactly four ZO queries each iteration, so  the query complexity \emph{aligns} with the iteration complexity. We see from Figure~\ref{fig:Skew-Quartic} that that \algoName{HARP} with $\bSigma_k = \hbH_k $ outperforms \algoName{SPSA} with $\bSigma_k = \bI$ for the ill-conditioned problem of minimizing a skew-quartic function. 
  
  \begin{figure}[!htbp]
  	\centering
  	\includegraphics[width=.5\linewidth]{./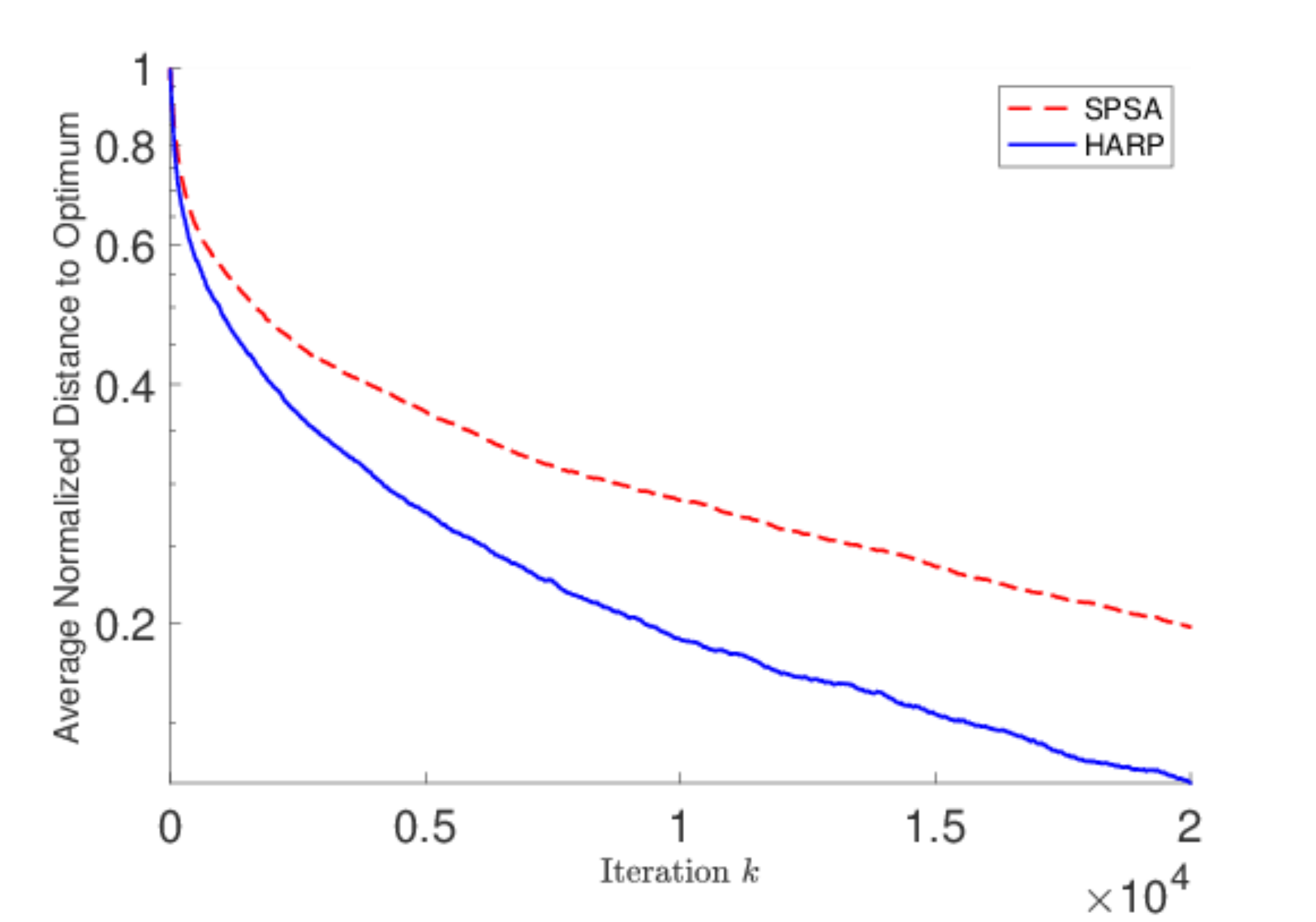} 
  	\caption{ Performance of \algoName{SPSA}  and \algoName{HARP}   in terms of normalized 
  		distance $ \nicefrac{\norm{\hbtheta_k-\bvartheta}}{\norm{\hbtheta_0-\bvartheta}} $   averaged across  $25$ independent replicates, and  both algorithms use four ZO queries per iteration.  The underlying loss function  is the skew-quartic function with   $\Dimension=20$, and the noisy observation is corrupted by a $\mathcal{N}\ilparenthesis{0,1}$  noise. 
  	}
  	\label{fig:Skew-Quartic}
  \end{figure}

 \subsection{Universal Image Attack As A Finite-Sum Problem} 
 
 We consider the problem of generating black-box adversarial examples universally for $I>1$ images \cite{chen2017zoo,cheng2018query} using zeroth-order optimization methods.     We consider  the constrained problem 
 \begin{equation}
 \begin{cases}
 &\min_{\btheta}   \loss\ilparenthesis{\btheta }\equiv  \underbrace{\penaltyPara \norm{\btheta}_2^2}_{\equiv \loss_1\ilparenthesis{\btheta}}+   \underbrace{\frac{1}{I}\sum_{i=1}^I \individualLoss\ilparenthesis{\bzeta_i+\btheta} }_{\equiv \loss_2\ilparenthesis{\btheta}}\,, \\
 &\text{ s.t. } \ilparenthesis{\bzeta_i+\btheta}\in\bracket{-0.5, 0.5}^{\Dimension}, \forall i,   \,\, 
 \end{cases}  \label{eq:PerImage2}
 \end{equation} where the constraint is to normalize the resulting pixels within the range $ \bracket{-0.5, 0.5}^{\Dimension} $. The  $ \individualLoss\ilparenthesis{\cdot}:\real^\Dimension\mapsto\real $ imposed 
on each image takes the form
\begin{equation}
\individualLoss\ilparenthesis{\bzeta} = \max_{i:1\le i\le C} \set{ \mathrm{ps}\ilparenthesis{\bzeta, i } - \max_{j\neq i: 1\le j \le C} \bracket{\mathrm{ps}\ilparenthesis{\bzeta,j}} }\,,
\end{equation}
where $ \mathrm{ps}\ilparenthesis{\bzeta,i} $ denotes the prediction score of the $i$-th class given the input $\bzeta$. The model $ \mathrm{ps}\ilparenthesis{\cdot,\cdot} $here is trained using the structure specified   in \cite{carlini2017towards}. Note that $ \sum_{i=1}^I \individualLoss\ilparenthesis{\bzeta_i+\btheta} = 0 $ when the chosen  images $ \ilset{ \bzeta_i}_{i=1}^I $ are successfully attacked by the universal perturbation $\btheta$. 
The noisy loss observation $ \ell\ilparenthesis{\btheta,\upomega} $ is 
\begin{equation}
\ell\ilparenthesis{\btheta,\upomega} =  \penaltyPara\norm{\btheta}_2^2+ \frac{1}{J} \sum_{j=1}^{J}    \individualLoss\ilparenthesis{\bzeta_{i_j\ilparenthesis{\upomega}} +\btheta}  \,,
\end{equation} for   $  J\le I$, and the $J$      indexes  $\ilset{i_1\ilparenthesis{\upomega},\cdots, i_J\ilparenthesis{\upomega}}$ 
are i.i.d. uniformly drawn from $ \ilset{1,\cdots,I} $ (without replacement). 

  Consider (\ref{eq:PerImage2}) with  $\penaltyPara=\nicefrac{1}{10}$. The $I$ images arising in  (\ref{eq:PerImage2}) are those  \emph{correctly} classified by the trained model.    $ \Dimension = 784 $ for MNIST dataset. 
 The algorithm we compare against is \algoName{ZO-AdaMM} \cite{chen2019zo}.  Both algorithms are  initialized at      $ \hbtheta_0=\zero $. The ZO-query per iteration for both algorithms is 60, so the query complexity \emph{aligns} with the iteration complexity.     We perform 25 independent  replicates, each with $K=1000$ iterations.     The stepsizes are $\gain_k = \nicefrac{\gain}{(k+1+A)^{0.602}}$ and $\perturb_k = \nicefrac{\perturb}{(k+1)^{0.101}}$. The details of the hyper-parameters are in  \href{https://github.com/jingyi-zhu/HARP}{GitHub}.

 \begin{table}[!htbp]
 	\centering
 	\begin{tabular}{ c|| c |  c | c   } 
 		\hline 
 		Algo   &   $\E \ilbracket{\loss\ilparenthesis{\hbtheta_{K}}}$& $\ilset{\Var \ilbracket{\loss\ilparenthesis{\hbtheta_{K}}}}^{\frac{1}{2}}$  & $ \E \ilbracket{\loss_2\ilparenthesis{\hbtheta_K}}$    \\
 		\hline \hline
 		\multirow{1}{*}{ \shortstack[l]{\algoName{AdaMM}}  }  
 		& $185.96$ & $16.88$ & $40.95$  \\
 		\hline
 		\multirow{1}{*}{\algoName{HARP}}   
 		& $138.22$ & $18$ & $12.50$   \\
 		\hline \hline 
 	\end{tabular}\caption{
 		Performance of \algoName{ZO-AdaMM} and \algoName{HARP} in terms of loss   after $K=1000$ iterations  averaged across $25$ independent replicates. The loss function $L\ilparenthesis{\cdot}$  is the sum of  the magnitude cost $L_1\ilparenthesis{\cdot}$ and  the  attack loss $L_2\ilparenthesis{\cdot}$.  Here $L_2\ilparenthesis{\cdot}$   measures the  attack loss on   $ I= 100$ images of the letter one, and its \emph{noisy} query is evaluated using a batch-size of one. A close-to-zero $L_2\ilparenthesis{\cdot}$ loss is equivalent to a close-to-one attack success rate. 
 		\remove{
 			AdaMM: a = 0.5 (0.6), c = 0.04.   
 		}
 		\remove{
 			HARP: a = 0.005, c = 0.04, w = 1e-5. 
 		}      
 	} \label{tab:universalOn1}
 \end{table} 
 
 \remove{ 
 	\begin{table}[!htbp]
 		\centering
 		\begin{tabular}{ c || c | c | c   } 
 			\hline 
 			Algo   &   $\E \ilbracket{\loss\ilparenthesis{\hbtheta_{K}}}$& $\ilset{\Var \ilbracket{\loss\ilparenthesis{\hbtheta_{K}}}}^{\frac{1}{2}} $& $ \E \ilbracket{\loss_2\ilparenthesis{\hbtheta_K}}$   \\
 			\hline \hline
 			\multirow{1}{*}{ \shortstack[l]{\algoName{AdaMM}}  }  
 			& $236.61$ & $10.92$ & $80.86
 			$   \\
 			\hline
 			\multirow{1}{*}{\algoName{HARP}}   
 			& $189.73$ & $6.06$ & $60.23$   \\
 			\hline \hline 
 		\end{tabular}\caption{Attacking
 			images of ``7'' with  $ (I,J) = (50,10) $.  
 			\remove{
 				AdaMM: a = 5, c = 0.04.  
 			}
 			\remove{
 				HARP: a = 0.08, c = 0.08, w = 1e-5. 
 			}
 		} \label{tab:universalOn7}
 	\end{table}
 	
 }

 \begin{table}[!htbp]
 	\centering
 	\begin{tabular}{ c|| c | c | c   } 
 		\hline 
 		Algo   &   $\E \ilbracket{\loss\ilparenthesis{\hbtheta_{K}}}$& $\ilset{\Var \ilbracket{\loss\ilparenthesis{\hbtheta_{K}}}}^{\frac{1}{2}}$ & $ \E \ilbracket{\loss_2\ilparenthesis{\hbtheta_K}}$   \\
 		\hline \hline
 		\multirow{1}{*}{ \shortstack[l]{\algoName{AdaMM}}  }  
 		& $56.95$ & $6.89$ & $11.75$   \\
 		\hline
 		\multirow{1}{*}{\algoName{HARP}}   
 		& $18.46$ & $1.37$ & $0.13$   \\
 		\hline \hline 
 	\end{tabular}\caption{Here $L_2\ilparenthesis{\cdot}$   measures the  attack loss on   $ I= 10$ images of the letter three, and its ZO query is noise-free. 
 		\remove{
 			AdaMM: a = 1, c = 0.04.  
 		}    
 		\remove{ 
 			HARP: a = 0.1, c = 0.04, w = 1e-5. 
 		}
 	} \label{tab:universalOn3}
 \end{table} 
 Tables~\ref{tab:universalOn1}\textendash \ref{tab:universalOn3}   summarize the 
 terminal expected  loss function evaluation $ \E\ilbracket{\loss\ilparenthesis{\hbtheta_{1000}}} $, the terminal standard deviation of loss function evaluation $ \ilset{\Var\ilbracket{\loss\ilparenthesis{\hbtheta_{1000}}}}^{\frac{1}{2}} $, 
 and the terminal attack loss $\E\ilbracket{\loss_2\ilparenthesis{\hbtheta_{1000}}}$, all three of which are  averaged across $25$ independent replicates. 
 The  terminal scaled magnitude of the perturbation $ \E\ilbracket{\loss_1\ilparenthesis{\hbtheta_{1000}}} $  can be computed as (\ref{eq:PerImage2}).  
 Moreover, the noisier the loss function observation is (i.e., the larger the discrepancy between collected sample size $J$ and the entire sample size $I$), the more difficult it is to reduce the $ \E\ilbracket{\loss\ilparenthesis{\hbtheta_{K}}} $ given a fixed iteration/query budget. In both noisy and noise-free ZO queries, \algoName{HARP} shows faster convergence rate than  \algoName{ZO-AdaMM} given a fixed query-budget.

\section{Concluding Remarks}\label{sect:Conclude}
This work proposes \algoName{HARP} to use the second-order approximation from  ZO queries in both the random perturbation and the parameter update, and demonstrates its superiority in ill-conditioned problems theoretically in Section~\ref{subsect:HARP}   and numerically in Section~\ref{sect:Numerical}. Note that all the prior work use an identity/scalar matrix as the covariance matrix for the perturbation $\bDelta_k$ and use a \emph{deterministic} mapping $\mapping_k\ilparenthesis{\cdot}$. This work shows the benefits of using non-identity matrix as the covariance matrix for $\bDelta_k$ and a \emph{stochastic} mapping $\mapping_k\ilparenthesis{\cdot}$ which is $  {\field_k}$-measurable. This generalization allows experimenters to incorporate various self-learning structure on the random directions $\bDelta_k$.

Some potential future work includes (1) the generalization to root-finding problem where the Jacobian matrix is possibly asymmetric\footnote{Note that in our discussion, the Hessian matrix for minimization problem is symmetric.}; (2) the generalization to the one-measurement counterpart to (\ref{eq:ZOgradientTwoMeasurements})  as \cite{spall1997one} to further reduce query complexity; (3) the extended discussion on global convergence in line of  \cite{maryak2001global}; (4) the extension to constrained minimization problems, and the follow-up discussion when sparsity-promoted constraints are imposed; (5) the potential exploration on (early) stopping SA iterations based on the root-mean-squared error; (6) other forms  of $\bSigma_k$, including diagonal forms to reduce floating point operations per iteration.

 \section*{Acknowledgment}
 The author would like to thank Dr. Zhenliang Zhang, Dr. Jian Tan, and Dr. Wotao Yin for inspirational discussion.

\bibliography{sample}

 \clearpage
\appendix
 
 \section{Supplementary  Proofs}

\begin{proof}
	[Proof for Lemma~\ref{lem:Tradeoff}] First consider the bias term $ \bias_k\ilparenthesis{\hbtheta_k} $ of $ \hbg_k\ilparenthesis{\hbtheta_k} $ as an estimator for $\bg\ilparenthesis{\hbtheta_k}$. 
	\begin{align}
	& \E _k\ilbracket{\hbg_k\ilparenthesis{\hbtheta_k}} \nonumber\\
	&\,\, \stackrel{\text{a.s.}}{=}\E _k \bracket{ \frac{ \loss\ilparenthesis{\hbtheta_k+\perturb_k\bDelta_k} - \loss\ilparenthesis{\hbtheta_k-\perturb_k\bDelta_k} }{2\perturb_k} \mapping_k\ilparenthesis{\bDelta_k} }  +  \E _k \bracket{\frac{\mapping_k\ilparenthesis{\bDelta_k}}{2\perturb_k} \E\ilbracket{\given{\ilparenthesis{\upvarepsilon_k^+ - \upvarepsilon_k^-}}{\hbtheta_k,\bDelta_k}}}  \label{eq:Tradeoff1} \\ 
	&\,\,\stackrel{\text{a.s.}}{=} \E _ k \ilbracket{\mapping_k\ilparenthesis{\bDelta_k} \bDelta_k^\transpose } \bg\ilparenthesis{\hbtheta_k}  + \frac{\perturb_k^2}{12} \E _k \set{ \ilbracket{ \loss^{(3)} \ilparenthesis{\overline{\btheta}_k^+} + \loss^{(3)} \ilparenthesis{\overline{\btheta}_k^{-}} } \ilparenthesis{\bDelta_k\otimes\bDelta_k\otimes\bDelta_k} \mapping_k\ilparenthesis{\bDelta_k}  } \label{eq:Tradeoff2}
	\\
	&\,\,   \stackrel{\text{a.s.}}{=} \bg\ilparenthesis{\hbtheta_k} + \bias_k\ilparenthesis{\hbtheta_k}\,, \label{eq:Tradeoff2-2}
	\end{align}
	where equation  (\ref{eq:Tradeoff1}) uses \cite[Thm. 9.1.3 on p. 315]{chung2001course}, equation (\ref{eq:Tradeoff2}) 
	uses the third-order  Taylor expansion with mean-value forms of the remainder and 
	$ \E\ilbracket{\given{\upvarepsilon_k^+-\upvarepsilon_k^-}{\hbtheta_k,\bDelta_k}} \stackrel{\text{a.s.}}{=} 0  $ in A.\ref{assume:Noise}, equation (\ref{eq:Tradeoff2-2}) uses the expression (\ref{eq:bias})   and $ \E_k \ilbracket{\mapping_k\ilparenthesis{\bDelta_k} \bDelta_k^\transpose}    \stackrel{\text{a.s.}}{=} \bI $ assumed in A.\ref{assume:Perturbation}.  Then  
	\begin{align}
	&\E_k\ilbracket{ \norm{\bias_k\ilparenthesis{\hbtheta_k}} } \nonumber\\
	&\,\,\stackrel{\text{a.s.}}{\le }     \frac{\perturb_k^2}{6} \norm{\loss^{(3)}\ilparenthesis{\btheta}}_{\infty} \E _k \ilbracket{ \norm{\bDelta_k\otimes\bDelta_k\otimes\bDelta_k} \norm{\mapping_k\ilparenthesis{\bDelta_k} }} \label{eq:Tradeoff3}\\
	&\,\, \stackrel{\text{a.s.}}{=}  \frac{\perturb_k^2}{6} \BoundThirdOrder \E_k \ilbracket{\norm{\bDelta_k}^3 \mapping_k\ilparenthesis{\bDelta_k}}   \label{eq:Tradeoff4}\\
	&\,\, \stackrel{\text{a.s.}}{\le }  \frac{\perturb_k^2}{6}\BoundThirdOrder\BoundPerturbation\,, \label{eq:Tradeoff4-2}
	\end{align}
	where inequality  (\ref{eq:Tradeoff3}) uses the mean-value theorem ($ \int_D \abs{f_1\ilparenthesis{x}f_2\ilparenthesis{x}}\diff x \le \sup_{x\in D} \abs{f_1\ilparenthesis{x}} \int _D \abs{f_2\ilparenthesis{x}}\diff x $ for two functions $ f_1 $ and $ f_2 $ and some domain of integration $D$),  equality (\ref{eq:Tradeoff4}) uses the independence between $\hbtheta_k$ and $\bDelta_k$ assumed in A.\ref{assume:Perturbation} and \cite{lancaster1972norms}, and  inequality (\ref{eq:Tradeoff4-2}) uses A.\ref{assume:Perturbation}.
	The representation of $\noise_k\ilparenthesis{\hbtheta_k}$ in (\ref{eq:noise}) follows directly from (\ref{eq:gDecomposition}) and (\ref{eq:bias}).

	We then  consider the second-moment of $\noise_k\ilparenthesis{\hbtheta_k}$ through the following computation:
	\begin{align}
	&   \E_k \set{\norm{\hbg_k\ilparenthesis{\hbtheta_k}}^2} \nonumber\\
	&\,\, \stackrel{\text{a.s.}}{=}   \E_k \set{\largenorm{\frac{\loss\ilparenthesis{\hbtheta_k+\perturb_k\bDelta_k} - \loss\ilparenthesis{\hbtheta_k-\perturb_k\bDelta_k}}{2\perturb_k} \mapping_k\ilparenthesis{\bDelta_k} }^2}  \label{eq:Tradeoff7}\\
	&\,\, \quad +   \frac{1}{4\perturb_k^2}  \E_k \ilbracket{ \ilparenthesis{\upvarepsilon_k^+-\upvarepsilon_k^-}^2 \norm{\mapping_k\ilparenthesis{\bDelta_k}}^2 } \label{eq:Tradeoff6}\\
	&\,\,  \quad+
	\frac{1}{2\perturb_k^2}  \E _k  \bigg\{  \ilbracket{ \loss\ilparenthesis{\hbtheta_k+\perturb_k\bDelta_k} - \loss\ilparenthesis{\hbtheta_k-\perturb_k\bDelta_k} }   \ilparenthesis{\upvarepsilon_k^+ -\upvarepsilon_k^-} \norm{\mapping_k\ilparenthesis{\bDelta_k}}^2  \bigg\} . \label{eq:Tradeoff5}
	\end{align}
	The term on (\ref{eq:Tradeoff6}) becomes $ O(\perturb_k^{-2}) $ because 
	\begin{align}
	&\E_k\ilbracket{\ilparenthesis{\upvarepsilon_k^+-\upvarepsilon_k^-}^2\norm{\mapping_k\ilparenthesis{\bDelta_k}}^2} \nonumber\\
	&\,\,\stackrel{\text{a.s.}}{=} \E_k \bracket{ \norm{\mapping_k\ilparenthesis{\bDelta_k}}^2 \E \ilbracket{\given{\ilparenthesis{\upvarepsilon_k^+-\upvarepsilon_k^-}^2}{\hbtheta_k,\bDelta_k}} } \label{eq:Tradeoff8}\\
	&\,\,\stackrel{\text{a.s.}}{=} \BoundNoise \cdot \E_k \ilbracket{\norm{\mapping_k\ilparenthesis{\bDelta_k}}^2} \label{eq:Tradeoff9} \\
	&\,\,\stackrel{\text{a.s.}}{\le } \BoundNoise\BoundPerturbation\,, \label{eq:Tradeoff10}
	\end{align}
	where inequality  (\ref{eq:Tradeoff8}) uses \cite[Thm. 9.1.3]{chung2001course}, inequality (\ref{eq:Tradeoff9}) uses A.\ref{assume:Noise} and the independence between $\hbtheta_k$ and $\bDelta_k$, and inequality (\ref{eq:Tradeoff10}) uses  A.\ref{assume:Perturbation}. 
	The term on (\ref{eq:Tradeoff5}) becomes zero thanks to \cite[Thm. 9.1.3]{chung2001course} and $ \E\ilbracket{\given{\upvarepsilon_k^+-\upvarepsilon_k^-}{\hbtheta_k,\bDelta_k}} $
	assumed in A.\ref{assume:Noise}. The term on (\ref{eq:Tradeoff7}) can be bounded from above by   $ \BoundPerturbation \norm{\bg\ilparenthesis{\hbtheta_k}}^2 + O(\perturb_k^2) $, as
	\begin{align}
	&   \E_k \set{\largenorm{\frac{\loss\ilparenthesis{\hbtheta_k+\perturb_k\bDelta_k} - \loss\ilparenthesis{\hbtheta_k-\perturb_k\bDelta_k}}{2\perturb_k} \mapping_k\ilparenthesis{\bDelta_k} }^2}\nonumber\\
	&\,\, \stackrel{\text{a.s.}}{=}   \ilbracket{\bg\ilparenthesis{\hbtheta_k}}^\transpose \E_k \ilset{ \bDelta_k\ilbracket{\mapping_k\ilparenthesis{\bDelta_k}}^\transpose \mapping_k\ilparenthesis{\bDelta_k}\bDelta_k^\transpose } \bg\ilparenthesis{\hbtheta_k}\nonumber\\
	&\,\, \,\, + \frac{\perturb_k^4}{144} \E_k   \largenorm{ \ilbracket{ \loss^{(3)} \ilparenthesis{\overline{\btheta}_{k}^{+}}  +  \loss^{(3)} \ilparenthesis{\overline{\btheta}_k^{-}} }  \ilparenthesis{\bDelta_k\otimes\bDelta_k\otimes\bDelta_k} \mapping_k\ilparenthesis{\bDelta_k}     } ^2 \nonumber\\
	&\,\, \,\, + \frac{\perturb_k^2}{6} \ilbracket{\bg\ilparenthesis{\hbtheta_k}}^\transpose \E _k  \bigg\{  \bDelta_k\ilbracket{\mapping_k\ilparenthesis{\bDelta_k}}^\transpose\ilbracket{ \loss^{(3)} \ilparenthesis{\overline{\btheta}_{k}^{+}}  +  \loss^{(3)} \ilparenthesis{\overline{\btheta}_k^{-}} }  \times    \ilparenthesis{\bDelta_k\otimes\bDelta_k\otimes\bDelta_k} \mapping_k\ilparenthesis{\bDelta_k}  \bigg\} \nonumber\\
	&\,\,\stackrel{\text{a.s.}}{=} O\parenthesis{\norm{\bg\ilparenthesis{\hbtheta_k}}^2} + O\ilparenthesis{\perturb_k^2}\,, 
	\end{align} 
	thanks to A.\ref{assume:Perturbation} and third-order Taylor expansion. 
\end{proof}

\begin{proof}
	[Illustration for Paragraph~\ref{para:relaxedThrice}]   The proof directly follows from  the second-order Taylor expansion and   the Lipschitz Hessian condition on the remainder terms.  
	\begin{align}
	&\E_k\ilbracket{\hbg_k\ilparenthesis{\hbtheta_k}}\nonumber\\
	&\,\, \stackrel{\text{a.s.}}{=} \E_k \bracket{ \frac{\loss\ilparenthesis{\hbtheta_k+\perturb_k\bDelta_k} - \loss\ilparenthesis{\hbtheta_k-\perturb_k\bDelta_k}}{2\perturb_k}  \mapping_k\ilparenthesis{\bDelta_k}}  + \E _k \bracket{\frac{\mapping_k\ilparenthesis{\bDelta_k}}{2\perturb_k} \E\bracket{\given{\ilparenthesis{\upvarepsilon_k^+-\upvarepsilon_k^-}}{\hbtheta_k,\bDelta_k}}} \nonumber\\
	&\,\, \stackrel{\text{a.s.}}{=}  \E_k\ilbracket{\mapping_k\ilparenthesis{\bDelta_k}\bDelta_k^\transpose}\bg\ilparenthesis{\hbtheta_k}  + \frac{\perturb_k}{4} \E_k \set{ \bDelta_k^\transpose \ilbracket{\bH\ilparenthesis{\overline{\btheta}_k^+ } - \bH\ilparenthesis{\overline{\btheta}_k^-}} \bDelta_k } \label{eq:Hlipz0}\\
	&\,\,\stackrel{\text{a.s.}}{=}  \bg\ilparenthesis{\hbtheta_k} + \bias_k\ilparenthesis{\hbtheta_k}\,, \nonumber
	\end{align} where (\ref{eq:Hlipz0}) follows from the second-order Taylor expansion.
	Then  $\bias_k\ilparenthesis{\hbtheta_k}$ satisfies
	\begin{align}
	  \E_k\norm{\bias_k\ilparenthesis{\hbtheta_k}}   & \stackrel{\text{a.s.}}{\le}   \frac{\perturb_k}{4} \E _k  \set{\bDelta_k^\transpose \bracket{O(1) \norm{2\perturb_k\bDelta_k}} \bDelta_k }\label{eq:Hlips} \\
	&\,\, \stackrel{\text{a.s.}}{=} O(\perturb_k^2) \label{eq:Hlips1}
	\end{align} where the $O(1)$ in (\ref{eq:Hlips}) represents the Lipschitz parameter of $\bH\ilparenthesis{\cdot}$.  Note that the explicit scaling constant in (\ref{eq:Hlips1}) is no longer available as (\ref{eq:bias}). 
\end{proof}

\begin{proof}[Proof for Theorem~\ref{thm:StrongConvergence}]
	
	Under  assumptions  A.\ref{assume:ODE}, and A.\ref{assume:Stepsize}, we known from \cite[Thm. 2.3.1 on p. 39]{kushner1978stochastic} that Thm.~\ref{thm:StrongConvergence} holds  when the following two conditions  hold:
	\begin{enumerate}[i)]
		\item \label{item:as1} $\norm{ \bias_k\ilparenthesis{\hbtheta_k}}<\infty $ for all $k$ and 
		$ \bias_k\ilparenthesis{\hbtheta_k}\to \zero  $ a.s. 
		\item \label{item:as2} $\lim_{k\to\infty} \Prob\set{ \sup_{j\ge k} \norm{ \sum_{i=k}^j \gain_i\noise_i\ilparenthesis{\hbtheta_k} } \ge \upeta } = 0$ for any $\upeta>0$. 
	\end{enumerate}
	Obviously, \ref{item:as1} holds thanks to Lemma~\ref{lem:Tradeoff}. Under assumption  A.\ref{assume:Noise}, $\noise_k\ilparenthesis{\hbtheta_k} $ defined in (\ref{eq:noise}) is an $ \field_k $-martingale. Using \cite[Eq. (4.1.4)]{kushner2003stochastic}, we have
	\begin{align}
	&\Prob\set{\sup_{j\ge k}\norm{\sum_{i=k}^j \gain_i\noise_i\ilparenthesis{\hbtheta_i}}\nonumber	\ge \upeta} \\
	&\,\, \le \upeta^{-2} \E \norm{ \sum_{i=k}^{\infty} \gain_i\noise_i\ilparenthesis{\hbtheta_i} }^2\label{eq:StrongConvergence0}\\
	&\,\,= \upeta^{-2} \sum_{i=k}^{\infty} \gain_i^2 \E \norm{\noise_i\ilparenthesis{\hbtheta_i}}^2\,, \label{eq:StrongConvergence1}
	\end{align} 
	where inequality (\ref{eq:StrongConvergence0}) uses Markov's inequality, equality   (\ref{eq:StrongConvergence1}) uses $ \E[ ]\noise_i\ilparenthesis{\hbtheta_i}^\transpose\noise_j\ilparenthesis{\hbtheta_j} ]= \E \{\noise_i\ilparenthesis{\hbtheta_i}^\transpose \E \ilbracket{\given{\noise_j\ilparenthesis{\hbtheta_j}}{\hbtheta_j}} \}= 0  $  for all $i<j$. Given A.\ref{assume:Stepsize},  \ref{item:as2}  is also satisfied. The a.s. convergence from $\hbtheta_k$ to $\bvartheta$ is arrived. 
\end{proof}

\begin{proof}[Proof for Theorem~\ref{thm:StrongConvergence-2}]
	Let us first show part \ref{item:ub}.  Under A.\ref{assume:Convexity}, 	we have 
	\begin{align}
	& \E _k\ilbracket{\loss\ilparenthesis{\hbtheta_k}}\nonumber\\
	&\,\, \stackrel{\text{a.s.}}{\le} \E _k  \set{\loss\ilparenthesis{\hbtheta_k} - \gain_k\ilbracket{\bg\ilparenthesis{\hbtheta_k}}^\transpose\hbg_k\ilparenthesis{\hbtheta_k} + \frac{\BoundSecondOrder\gain_k^2}{2}\norm{\hbg_k\ilparenthesis{\hbtheta_k}}^2} \label{eq:conv1}\\
	&\,\, \stackrel{\text{a.s.}}{\le} \loss\ilparenthesis{\hbtheta_k} - \gain_k\norm{\bg\ilparenthesis{\hbtheta_k}}^2 + \gain _k O(\perturb_k^2) \norm{\bg\ilparenthesis{\hbtheta_k}} + \frac{\BoundSecondOrder \gain_k^2}{2}\bracket{ O\ilparenthesis{\perturb_k^2} + O(\perturb_k^{-2}) + O(\norm{\bg\ilparenthesis{\hbtheta_k}})^2 } \label{eq:conv2} \\
	&\,\, \stackrel{\text{a.s.}}{=} \loss\ilparenthesis{\hbtheta_k} - \gain_k\norm{\bg\ilparenthesis{\hbtheta_k}}^2 + O(\gain_k\perturb_k^2)\norm{\bg\ilparenthesis{\hbtheta_k}} + O(\gain_k^2\perturb_k^2))  + O\parenthesis{\frac{\gain_k^2}{\perturb_k^2}} + O\ilparenthesis{\gain_k^2} \norm{\bg\ilparenthesis{\hbtheta_k}}^2\nonumber\\
	&\,\, \stackrel{\text{a.s.}}{\le }
	\loss\ilparenthesis{\hbtheta_k} - \frac{\gain_k}{2}\parenthesis{\norm{\bg\ilparenthesis{\hbtheta_k} } - O(\perturb_k^2) }^2 + O\parenthesis{\gain_k^2\perturb_k^2} + O(\gain_k^2\perturb_k^{-2})\,,     \text{for large $k$ s.t.  $ O(\gain_k)<\nicefrac{1}{2} $}\,,
	\label{eq:conv3}
	\end{align}
	where (\ref{eq:conv1}) uses A.\ref{assume:Convexity} and mean-value theorem, (\ref{eq:conv2}) uses Cauchy-Schwartz inequality and  (\ref{eq:Tradeoff7})--(\ref{eq:Tradeoff5}), and (\ref{eq:conv3}) uses A.\ref{assume:Stepsize}. 
	
	Therefore, for sufficiently large $k$, we have
	\begin{align}&
	\E_k \ilbracket{\loss\ilparenthesis{\hbtheta_k} - \loss\ilparenthesis{\bvartheta}}  \stackrel{\text{a.s.}}{ \le } \loss\ilparenthesis{\hbtheta_k} - \loss\ilparenthesis{\bvartheta} + O(\gain_k^2\perturb_k^2 ) + O(\gain_k^2\perturb_k^{-2})\quad - \frac{\gain_k}{2}\parenthesis{\norm{\bg\ilparenthesis{\hbtheta_k}}- O(\perturb_k^2)}^2\,,
	\label{eq:conv4}	\end{align} 
	Under A.\ref{assume:Convexity} and A.\ref{assume:Stepsize}, \cite[Thm. 1]{lai1989extended} ensures that the nonnegative random variable $ \ilbracket{\loss\ilparenthesis{\hbtheta_k} - \loss\ilparenthesis{\bvartheta}} $ converges to a \emph{finite} random variable on a.s. Now that A.\ref{assume:Convexity} assumes $ \sup\set{\norm{\btheta}: \loss\ilparenthesis{\btheta}\le \loss\ilparenthesis{\bvartheta}+\constNum_1} $, the boundedness of $\loss\ilparenthesis{\hbtheta_k}$ a.s. implies the iterate boundedness $ \sup_{k}\norm{\hbtheta_k}<\infty $ a.s. 
	
	Next we show part \ref{item:as}. When (\ref{eq:conv4}) hold, \cite{robbins1971convergence} ensures that $ \lim_{k\to\infty} \sum_{i=1}^{k} \gain_i \ilbracket{ \norm{\bg\ilparenthesis{\hbtheta_i}} - O\ilparenthesis{\perturb_i^2} }^2<\infty $ a.s.  Together with A.\ref{assume:Stepsize}, we have $ \norm{\bg\ilparenthesis{\hbtheta_k}}\to 0  $ as $k\to\infty$ a.s. 
	
	For any \emph{fixed} sample point within a subset of $ \Omega\times\Omega_{\bDelta}  $ with a measure of $1$, the sequence $ \ilset{\hbtheta_0,\cdots,\hbtheta_k,\cdots} $  is a bounded sequence per \ref{item:ub}. By Bolzano-Weierstrass theorem, we can pick a sub-sequence $ \ilset{\hbtheta_{k_0},\cdots,\hbtheta_{k_i},\cdots} $ such that $ \norm{\bg\ilparenthesis{\hbtheta_{k_i}}}\to \zero ^+ $ as $i\to\infty$ a.s. Moreover, the fact that $ \norm{\bg\ilparenthesis{\hbtheta_k}}\to 0 $ a.s.  and the smoothness of $ \bg\ilparenthesis{\cdot}$ ensure that  the limit point of the sub-sequence $ \ilset{\hbtheta_{k_0},\cdots,\hbtheta_{k_i},\cdots} $  as $i\to\infty$ coincides with the limit point of the entire sequence $ \ilset{\hbtheta_0,\cdots,\hbtheta_k,\cdots} $  as $k\to\infty$. Finally, A.\ref{assume:Convexity} asserts that $\bvartheta$ is the unique minimizer such that all neighboring points around it have nonzero gradient evaluation, so the claim in \ref{item:as} is shown. 
\end{proof}

\begin{proof}
	[Proof for Theorem~\ref{thm:StrongConvergenceHessian}] 	First consider the term $ \tperturb_k^{-1}\overline{\ell}_k \mapping_k\ilparenthesis{\tbDelta_k}  $.  
	\begin{align}
	&\E \ilparenthesis{\given{ \tperturb_k^{-1}\overline{\ell}_k \mapping_k\ilparenthesis{\tbDelta_k} }{\hbtheta_k,\bDelta_k}}  \stackrel{\text{a.s.}}{=} \bg\ilparenthesis{\hbtheta_k + \perturb_k\bDelta_k} - \bg\ilparenthesis{\hbtheta_k-\perturb_k\bDelta_k} + O(\perturb_k^3) \,,  \label{eq:HARP-bias1}
	\end{align}
	where the $O(\perturb_k^3)$ term in (\ref{eq:HARP-bias1}) is the difference of the two $O(\perturb_k^2)$ bias terms in the one-sided  gradient approximations for $\bg\ilparenthesis{\hbtheta_k \pm  \perturb_k\bDelta_k}$  in $\tperturb_k^{-1}\overline{\ell}_k \mapping_k\ilparenthesis{\tbDelta_k}$  and $ \tperturb_k = O(\perturb_k) $. Hence, by an expansion of each of $\bg\ilparenthesis{\hbtheta_k \pm \perturb_k\bDelta_k}$, we have for any $i$, $j$
	\begin{align}
	& \E  \parenthesis{\given{ \frac{\overline{\ell}_k}{2\perturb_k\tperturb_k} \mapping_k\ilparenthesis{\tbDelta_k} \ilbracket{\mapping_k\ilparenthesis{\bDelta_k}}^\transpose }{\field_k,\bDelta_k}}  \stackrel{\text{a.s.}}{=}\bH\ilparenthesis{\hbtheta_k} + O(\perturb_k^2)\,,   \label{eq:HARP-bias2}
	\end{align}
	where (\ref{eq:HARP-bias2}) uses (\ref{eq:HARP-bias1}) and  $\E_k\ilparenthesis{\mapping_k\ilparenthesis{\bDelta_k} \bDelta_k^\transpose} = \bI$  in A.\ref{assume:Perturbation}. Note that 
	the $O(\perturb_k^2)$ term in (\ref{eq:HARP-bias2})   absorbs higher-order terms in the Taylor  expansion of $ \bg\ilparenthesis{\hbtheta_k + \perturb_k\bDelta_k} - \bg\ilparenthesis{\hbtheta_k-\perturb_k\bDelta_k}  $ in (\ref{eq:HARP-bias1}).

	Another symmetrization  operation of $ \ilparenthesis{2\perturb_k\tperturb_k}^{-1} \overline{\ell}_k\mapping_k\ilparenthesis{\tbDelta_k} \ilbracket{\mapping_k\ilparenthesis{\bDelta_k}}^\transpose $ gives the latter part of (\ref{eq:HessianUpdate}), in order to  ensure a symmetric Hessian estimate. 
	
	Given (\ref{eq:HARP-bias2}), the statement that $\obH_k  \stackrel{\mathrm{a.s.}}{\longrightarrow}\bH\ilparenthesis{\bvartheta}$ follows from the  Theorem~\ref{thm:StrongConvergence} or Theorem~\ref{thm:StrongConvergence-2},  the updating recursion (\ref{eq:HessianUpdate}),  the algorithmic form in Algorithm~\ref{algo:HARP} and the corresponding analysis in \cite{zhu2019efficient}. 
\end{proof}

\begin{proof}[Proof for Theorem~\ref{thm:AsymptoticNormality}]
	The asymptotic normality result will be shown once the conditions (2.2.1), (2.2.2), and (2.2.3) of \cite{fabian1968asymptotic} hold.  
	
	We first show that  \cite[Eq. (2.2.1)]{fabian1968asymptotic} hold.  We see that $\bGamma_k\to \gain \bH\ilparenthesis{\bvartheta}$ a.s. by the result in Thm.~\ref{thm:StrongConvergence}  and the continuity of $\bH\ilparenthesis{\cdot}$ as assumed in A.\ref{assume:Loss}. 
	When $\upalpha<6\upgamma$, we have $ \bm{t}_k\to \zero $ a.s., as Lemma~\ref{lem:Tradeoff} shows that $ \norm{\bias_k\ilparenthesis{\hbtheta_k} } = O(\perturb_k^2) = O(k^{-2\upgamma}) $ a.s.  When $\upalpha=6\upgamma$, using A.\ref{assume:Perturbation} and Thm.~\ref{thm:StrongConvergence}, we know that $ { \bm{t}_k} = -\gain (k+1)^{2\upgamma} \cdot O(\perturb_k^2) = O(1) $.  
	Using (\ref{eq:bias}), A.\ref{assume:Loss}, and  Thm.~\ref{thm:StrongConvergence}, we have 
	\begin{equation}\label{eq:limitT1} \bias_k \stackrel{k\to\infty}{\longrightarrow} \frac{1}{6} \perturb_k^2  \E \ilbracket{\loss^{(3)} \ilparenthesis{\bvartheta }\cdot  \ilparenthesis{\bDelta\otimes\bDelta\otimes\bDelta}\cdot \mapping \ilparenthesis{\bDelta} } \, \text{a.s.} \,,
	\end{equation}
	thanks to the  dominated convergence theorem. 
	Multiplying $ -\gain (k+1)^{\nicefrac{\uptau}{2}} = -\gain (k+1)^{2\upgamma} $ on both sides of  (\ref{eq:limitT1}) gives (\ref{eq:bias1}).  
	Combined the cases for $\upalpha<6\upgamma$ and $\upalpha=6\upgamma$,  we know that $ \bm{t}_k  $ converges to a finite vector   for $\upalpha\le6\upgamma$. 
	
	We then show that \cite[Eq. (2.2.2)]{fabian1968asymptotic} hold.
	By definition (\ref{eq:gDecomposition}),  $\noise_k\ilparenthesis{\hbtheta_k}$ is a $\field_k$-measurable martingale  sequence, and so is $ \bv_k  $.  
	\begin{align}
	& \E _k\ilparenthesis{\bv_k\bv_k^\transpose} \nonumber\\
	&\,\,  \stackrel{\text{a.s.}}{=}
	\frac{\gain^2}{\ilparenthesis{k+1}^{2\upgamma}}
	\big(  \E_k \ilset{ \hbg_k\ilparenthesis{\hbtheta_k} \ilbracket{\hbg_k\ilparenthesis{\hbtheta_k}}^\transpose }     - \E_k \ilbracket{\hbg_k\ilparenthesis{\hbtheta_k}}  \ilset{   \E_k \ilbracket{\hbg_k\ilparenthesis{\hbtheta_k}}   }^\transpose \big)  \label{eq:limitVV1}\\
	&\,\,  \stackrel{\text{a.s.}}{=}   \frac{\gain^2}{\perturb^2}\perturb_k^2\E_k \ilset{  \hbg_k\ilparenthesis{\hbtheta_k } \ilbracket{ \hbg_k\ilparenthesis{\hbtheta_k } }^\transpose}   +\frac{\gain^2}{\perturb^2}\perturb_k^2 \ilbracket{ \bg_k\ilparenthesis{\hbtheta_k} + \bias_k\ilparenthesis{\hbtheta_k} }\ilbracket{ \bg_k\ilparenthesis{\hbtheta_k} + \bias_k\ilparenthesis{\hbtheta_k} }^\transpose \label{eq:limitVV2} \\ 
	&\,\, \stackrel{\text{a.s.}}{=} \frac{\gain^2}{\perturb^2}  \cdot \E _k \bracket{ \parenthesis{\frac{\upvarepsilon_k^+-\upvarepsilon_k^-}{2 }}^2 \mapping_k\ilparenthesis{\bDelta_k} \ilbracket{\mapping_k\ilparenthesis{\bDelta_k}}^\transpose  }    + o(1)\nonumber\\
	& \,\, \stackrel{\text{a.s.}}{=} \frac{\gain^2}{4\perturb^2}\E _k \set{ \mapping_k\ilparenthesis{\bDelta_k} \ilbracket{\mapping_k\ilparenthesis{\bDelta_k}}^\transpose\E\ilbracket{\given{\ilparenthesis{\upvarepsilon_k^+-\upvarepsilon_k^-}^2}{\hbtheta_k,\bDelta_k}}  }    + o(1)\nonumber\\
	&\,\, \stackrel{\text{a.s.}}{=} \frac{\gain^2}{\perturb^2} \frac{2\Var\ilbracket{\ell\ilparenthesis{\bvartheta,\upomega}}}{4}  \E \ilset{\mapping_k\ilparenthesis{\bDelta_k}\ilbracket{\mapping_k\ilparenthesis{\bDelta_k}}^\transpose}+ o(1)\label{eq:limitVV3}\\
	&\,\,\stackrel{\text{a.s.}}{\longrightarrow}  \frac{\gain^2 \Var\ilbracket{\ell\ilparenthesis{\bvartheta,\upomega}}}{2\perturb^2} \bSigma\,, \text{ as }k\to\infty\,, \label{eq:limitVV4}
	\end{align}
	where (\ref{eq:limitVV1}) follows from (\ref{eq:gDecomposition}), the $o(1)$ term on (\ref{eq:limitVV2}) is due to  A.\ref{assume:Perturbation},   (\ref{eq:bias}), Lemma~\ref{lem:Tradeoff}, and Theorem~\ref{thm:StrongConvergence}, 
	both (\ref{eq:limitVV3}) and (\ref{eq:limitVV4}) are due to A.\ref{assume:AdditionalNormality} and Remark~\ref{rem:NoiseCMC}.

	We finally show that either (2.2.3) or (2.2.4) in  \cite{fabian1968asymptotic} hold. That is, for every $\upeta>0$, $ \lim_{k\to\infty} \E  ({ \norm{\bv_k}^2 \indicator _{\set{\norm{\bv_k}^2\ge \upeta k^{\upalpha}}} })   = 0 $.  For any $\constNum_5\in \ilparenthesis{0,\constNum_4/2}$, we have 
	\begin{align}
	&\lim_{k\to\infty} \E \parenthesis{ \norm{\bv_k}^2 \indicator _{\set{\norm{\bv_k}^2\ge \upeta k^{\upalpha}}} } \nonumber\\
	&\,\, \le \limsup_{k\to\infty} \ilbracket{\Prob\ilparenthesis{ \norm{\bv_k}^2\ge \upeta k^{\upalpha} }}^{\frac{\constNum_5}{1+\constNum_4}} \cdot \ilbracket{\E\ilparenthesis{\norm{\bv_k}^{2\ilparenthesis{1+\constNum_5}}}}^{\frac{1}{1+\constNum_5}} \nonumber\\
	& \,\, \le   \limsup_{k\to\infty}  \parenthesis{ \frac{\E \ilparenthesis{\norm{\bv_k}^2}}{\upeta k^{\upalpha}} }^{\frac{\constNum_5}{1+\constNum_4}} \cdot \ilbracket{ \E\ilparenthesis{\norm{\bv_k}^{2\ilparenthesis{1+\constNum_5}}}} ^{\frac{1}{1+\constNum_5}}\,,
	\end{align}
	where the first inequality is due to Holder's inequality and the second inequality is due to Markov's inequality.
	
	Using Minkowski inequality, we have   $ \norm{\bv_k}^{2\ilparenthesis{1+\constNum_5}}\le 2(1+\constNum_5) k^{-2\ilparenthesis{1+\constNum_5}\upgamma} \big[ \norm{ \hbg_k\ilparenthesis{\hbtheta_k}^{2\ilparenthesis{1+\constNum_5}} + \norm{\bg\ilparenthesis{\hbtheta_k}}^{2\ilparenthesis{1+\constNum_5}} + \norm{\bias_k\ilparenthesis{\hbtheta_k}}^{2\ilparenthesis{1+\constNum_5}}    }  \big]$. From Lemma~\ref{lem:Tradeoff} and A.\ref{assume:ODE}, we know that there exists some  $K$ such that both $\bias_k\ilparenthesis{\hbtheta_k}$ and $\bg\ilparenthesis{\hbtheta_k}$ are uniformly    bounded a.s. for all $k\ge K$. Lemma~\ref{lem:Tradeoff} also implies that $ \norm{\hbg_k\ilparenthesis{\hbtheta_k}}= O(\perturb_k^{-2}) $. Combined, we have $ \E\norm{\bv_k}^{2\ilparenthesis{1+\constNum_5}} = O(1) $. 
	
	Now that all relevant conditions in \cite{fabian1968asymptotic} are met to ensure the asymptotic normality. 
\end{proof}

\begin{proof}[Proof of Lemma~\ref{lem:CRN}]
	
	Under A.\ref{assume:NoiseCRN}, 
	\begin{align}
	& \E _k \ilbracket{\hbg_k\ilparenthesis{\hbtheta_k} \ilbracket{\hbg_k\ilparenthesis{\hbtheta_k} }^\transpose}\nonumber\\
	&\,\,\stackrel{\text{a.s.}}{=}  \frac{1}{4\perturb_k^2 }   \E_k  \bigg\{  \mapping_k\ilparenthesis{\bDelta_k} \ilbracket{\mapping_k\ilparenthesis{\bDelta_k}}^\transpose \times  \ilbracket{\ell\ilparenthesis{\hbtheta_k + \perturb_k\bDelta_k,\upomega_k} - \ell\ilparenthesis{\hbtheta_k -\perturb_k\bDelta_k,\upomega_k}}^2 \bigg\}\nonumber\\
	&\,\,\stackrel{\text{a.s.}}{=}  \frac{1}{4\perturb_k^2 }   \E_k  \bigg\{  \mapping_k\ilparenthesis{\bDelta_k} \ilbracket{\mapping_k\ilparenthesis{\bDelta_k}}^\transpose  \times  \E\ilbracket{\given{\ilbracket{\ell\ilparenthesis{\hbtheta_k + \perturb_k\bDelta_k,\upomega_k} - \ell\ilparenthesis{\hbtheta_k -\perturb_k\bDelta_k,\upomega_k}}^2}{\hbtheta_k, \bDelta_k}} \bigg\}\,. \label{eq:CRN1}
	\end{align}
	Similar to the third-order Taylor expansion in Lemma~\ref{lem:Tradeoff}, we have 
	\begin{align}
	&\frac{1}{4\perturb_k^2 }\E\ilbracket{\given{\ilbracket{\ell\ilparenthesis{\hbtheta_k + \perturb_k\bDelta_k,\upomega_k} - \ell\ilparenthesis{\hbtheta_k -\perturb_k\bDelta_k,\upomega_k}}^2}{\hbtheta_k, \bDelta_k}}\nonumber\\
	&\,\, \stackrel{\text{a.s.}}{=} \E \set{\given{\ilbracket{\bDelta_k ^\transpose \noisyG\ilparenthesis{\hbtheta_k,\upomega_k}}^2}{\hbtheta_k,\bDelta_k}} + O(\perturb_k^4)\nonumber\\
	&\,\, \stackrel{\text{a.s.}}{=}  \bracket{\bDelta_k ^\transpose \noisyG\ilparenthesis{\hbtheta_k,\upomega_k}}^2 + O(\perturb_k^4)\,.
	\end{align} Whence, (\ref{eq:CRN1}) becomes
	\begin{align}
	&\E _k \ilbracket{\hbg_k\ilparenthesis{\hbtheta_k} \ilbracket{\hbg_k\ilparenthesis{\hbtheta_k} }^\transpose} \nonumber\\
	&\,\, \stackrel{\text{a.s.}}{=} \E _k \set{ \mapping_k\ilparenthesis{\bDelta_k}   \bDelta_k ^\transpose \noisyG\ilparenthesis{\hbtheta_k,\upomega_k}  \ilbracket{\noisyG\ilparenthesis{\hbtheta_k,\upomega_k}}^\transpose \bDelta_k  \ilbracket{\mapping_k\ilparenthesis{\bDelta_k}}^\transpose }  + o(1)\,. \label{eq:CRN2}
	\end{align}
	Now that A.\ref{assume:Perturbation} assumes independence between $\hbtheta_k$ and $\bDelta_k$, then the $ (i,j)-$th component of (\ref{eq:CRN2}) equals the following a.s.:    
	\begin{align}
	& \E \bracket{\sum_{p=1}^{\Dimension} \sum_{q=1}^{\Dimension} m_{k,i} \Delta_{k,p} \Delta_{k,q} m_{k,j} } \cdot \E_k \parenthesis{ \noisyGcomponent_{k,p}\noisyGcomponent_{k,q}} + o(1)\nonumber\\
	& \,\, \stackrel{\text{a.s.}}{=} \bracket{\indicator_{\ilset{i=j}} \indicator_{\ilset{p = q}}   + \indicator_{\ilset{i\neq j }}\ilparenthesis{ \indicator_{\ilset{ p=i, q = j }} + \indicator_{ \ilset{ p = j, q = i  }}} }  \times   \E_k \parenthesis{ \noisyGcomponent_{k,p}\noisyGcomponent_{k,q}   } + o(1)
	\label{eq:CRN3}\\
	&\,\,\stackrel{\text{a.s.}}{=}\begin{cases}
	\sum_{p=1}^{\Dimension}  \E _k \ilparenthesis{  \noisyGcomponent_{k,p}   }^2 + o(1),  &\text{if } i = j,\\ 
	2\E_k \ilparenthesis{ \noisyGcomponent_{k, i}   \noisyGcomponent_{k, j}  }  + o(1), & \text{if }i\neq j.
	\end{cases}\label{eq:CRN4}
	\end{align}
	where $ m_{k,i} $ is the $i$th component of $ \mapping_k\ilparenthesis{\bDelta_k} $,   $ \Delta_{k,p} $ is the $p$th component of $\bDelta_k$, $\noisyGcomponent_{k,p}$ is the $p$th component of $ \noisyG\ilparenthesis{\hbtheta_k,\upomega_k} $,   equality (\ref{eq:CRN3}) uses $ \E_k \ilbracket{\mapping_k\ilparenthesis{\bDelta_k}\bDelta_k^\transpose} = \bI $ in A.\ref{assume:Perturbation}. Taking the diagonal terms  of (\ref{eq:CRN4})    
	gives    (\ref{eq:NoiseCRN-1}). 
\end{proof}

\begin{proof}[Proof for Theorem~\ref{thm:AsymptoticNormality-CRN}]
	
	We first show that \cite[Eq. (2.2.1)]{fabian1968asymptotic} hold. As in the proof for Thm.~\ref{thm:AsymptoticNormality}, $\bGamma_k\to \gain\bH\ilparenthesis{\bvartheta}$ a.s. When $ \upalpha<  4\upgamma $, $ \bm{t}_k = O(k^{\nicefrac{\upalpha}{2}-2\upgamma})  \to \zero $. Hence,  \cite[Eq. (2.2.1)]{fabian1968asymptotic}  is met. 
	
	We then show \cite[Eq. (2.2.2)]{fabian1968asymptotic} hold.  Following the same reasoning as (\ref{eq:limitVV2}), we have 
	\begin{align}  \E_k\ilparenthesis{\bv_k\bv_k^\transpose} \stackrel{\text{a.s.}}{=} \gain ^2  \E_k\ilset{\hbg_k\ilparenthesis{\hbtheta_k} \ilbracket{\hbg_k\ilparenthesis{\hbtheta_k}} ^\transpose}   + o(1)\,, 
	\end{align} which is exactly (\ref{eq:CRN4}). 
	Under A.\ref{assume:NoiseCRN},  $\upomega_k$ is independent from $\field_k$, we have  $\E_k \noisyGcomponent_{k,p}^2 \stackrel{\text{a.s.}}{=} \E \noisyGcomponent_{k,p}^2 \stackrel{\text{a.s.}}{=}  \int_{\upomega\in \Omega } \ilbracket{\noisyGcomponent\ilparenthesis{\hbtheta_k,\upomega}}_{p}^2 \diff \Prob\ilparenthesis{\upomega}  \stackrel{\text{a.s.}}{\longrightarrow}   \int_{\upomega\in \Omega } \ilbracket{\noisyGcomponent\ilparenthesis{\bvartheta,\upomega}}_{p}^2 \diff \Prob\ilparenthesis{\upomega}   $
	as $k\to\infty$, where the asymptotic relationship is due to dominated convergence theorem and A.\ref{assume:NoiseCRN}. Following the same line of reasoning, $ 
	\E_k \ilparenthesis{ \noisyGcomponent_{k, i}   \noisyGcomponent_{k, j}  }    \stackrel{\text{a.s.}}{=}  \int_{\upomega\in \Omega } \ilbracket{\noisyGcomponent\ilparenthesis{\hbtheta_k,\upomega}}_{i}  \ilbracket{\noisyGcomponent\ilparenthesis{\hbtheta_k,\upomega}}_{j}  \diff \Prob\ilparenthesis{\upomega}  \stackrel{\text{a.s.}}{\longrightarrow}   \int_{\upomega\in \Omega } \ilbracket{\noisyGcomponent\ilparenthesis{\bvartheta,\upomega}}_{i}  \ilbracket{\noisyGcomponent\ilparenthesis{\bvartheta,\upomega}}_{j}  \diff \Prob\ilparenthesis{\upomega}   $. Combined,  we have (\ref{eq:noisyGcov}). 
	
	The proof of showing \cite[Eq. (2.2.3)]{fabian1968asymptotic} is exactly the same as that in proof for Theorem~\ref{thm:AsymptoticNormality}.  
\end{proof}

\end{document}